\newtheorem{defin}{Definition}[section]
\newtheorem{theorem}[defin]{Theorem}
\newtheorem{exa}[defin]{Example}
\newtheorem{lemma}[defin]{Lemma}
\newenvironment{proof}
{\noindent{\it Proof.}}{\hfill $\Box$\par\vspace{2.5mm}}
\newtheorem{que}{Question}
\newtheorem{pro}{Problem}
\numberwithin{equation}{section}
\title{\bf\Large A Malmquist-Yosida type theorem for Schwarzian differential equations}
\author{Xiong-Feng Liu\thanks{    
		\noindent
		Shantou University, Department of Mathematics,
		Daxue Road No.~243, Shantou 515063, China}\thanks{
		E-mail: 20xfliu@stu.edu.cn
}}
\date{}
\begin{document}
	
    \maketitle
    \begin{abstract}
        In this paper, we study a Malmquist-Yosida type theorem for Schwarzian differential equations
            \begin{equation}\label{1}
      			S(f,z)^{m} = R(z,f) = \frac{P(z,f)}{Q(z,f)},\tag{+}
            \end{equation}
        where $m \in \mathbb{N}^{+}$, $P(z,f)$ and $Q(z,f)$ are irreducible polynomials in $f$ with rational coefficients.  If \eqref{1} admits a transcendental meromorphic solution $f$, then by a suitable M$\mathrm{\ddot{o}}$bius transformation $f \to u$, $u$ satisfies a Riccati differential equation with small meromorphic coefficients, or one of  the six types of first-order differential equations \eqref{1.2}-\eqref{Thm.1.E.6}, or $u$ satisfies one of types \eqref{Thm.1.E.7}-\eqref{E.14}. In addition, we improve the result of Ishizaki \cite[Theorem~1.1]{ISZ1997} on Schwarzian differential equations \eqref{1} with small meromorphic coefficients when $m=1$.
        
        \medskip
        
        \noindent
        \textbf{Keywords:} Malmquist-Yosida type theorem; Schwarzian derivative; Schwarzian differential equations

        \medskip
        \noindent
        \textbf{2020 MSC: 34M05(Primary), 30D35(Secondary)}
    \end{abstract}
    
\section{Introduction}

    The celebrated Malmquist theorem, due to Malmquist~\cite{Malmquist1913} in 1913, states that if the first-order differential equation
        \begin{equation}\label{S.E.1}
            (f')^{n} = R(z, f),
        \end{equation}
    where the right-hand side is rational in both arguments and $n=1$, admits a transcendental meromorphic solution, then the equation reduces to a Riccati differential equation
        $$
        f' = a(z) + b(z)f + c(z)f^{2}
        $$
    with rational coefficients $a(z), b(z)$ and $c(z)$.
    Malmquist's proof was independent of Nevanlinna theory. In 1933, Yosida \cite{Yosida1933} was the first one to prove the Malmquist theorem by using Nevanlinna theory.  In 1978, Steinmetz~\cite{Steinmetz1978} proved that if the differential equation
     \eqref{S.E.1}
    admits a transcendental meromorphic solution, then by a suitable M$\mathrm{\ddot{o}}$bius transformation,
    \eqref{S.E.1} reduces into one of the following types
    \begin{align}
        f' &= a(z) + b(z)f + c(z)f^{2},\label{1.1}\tag{E.1}\\
        (f')^{2} &= a(z)(f - b(z))^{2}(f - \tau_{1})(f - \tau_{2}),\label{1.2}\tag{E.2}\\
        (f')^{2} &= a(z)(f - \tau_{1})(f - \tau_{2})(f - \tau_{3})(f - \tau_{4}),\label{1.3}\tag{E.3}\\
        (f')^{3} &= a(z)(f - \tau_{1})^{2}(f - \tau_{2})^{2}(f - \tau_{3})^{2},\label{1.4}\tag{E.4}\\
        (f')^{4} &= a(z)(f - \tau_{1})^{2}(f - \tau_{2})^{3}(f - \tau_{3})^{3},\label{1.5}\tag{E.5}\\
        (f')^{6} &= a(z)(f - \tau_{1})^{3}(f - \tau_{2})^{4}(f - \tau_{3})^{5},\label{1.6}\tag{E.6}
    \end{align}
    where $\tau_{1},\tau_{2},\tau_{3},\tau_{4}$ are distinct constants, and the coefficients $a(z), b(z), c(z)$ are rational functions.
    In the seventies, Laine \cite{Laine1971}, Yang \cite{CCYang1972}, and Hille \cite{Hille1976} generalized the Malmquist theorem to the case that $R(z,f)$ is rational in $f$ with small meromorphic coefficients.
    The Steinmetz's result was extended to the case that $R(z, f)$ is rational in $f$ with small meromorphic coefficients by Rieth~\cite{Rieth1986} and He-Laine\cite{He&Laine1990}. 
    
    From the results above, it is natural to study the Malmquist-Yosida type theorem for Schwarzian differential equations. We denote the Schwarzian derivative of a meromorphic function $f$ with respect to $z$ by $S(f,z)$ which is defined by
    $$
    S(f,z)=\left(\frac{f''}{f'}\right)'-\frac{1}{2}\left(\frac{f''}{f'}\right)^2.
    $$
    It is obvious that
        $$
        S(f,z)=\frac{d^2}{dz^2}\left(\log\frac{df}{dz}\right)-
        \frac{1}{2}\bigg[\frac{d}{dz}\left(\log\frac{df}{dz}\right)\bigg]^2,
        $$
    which can be written as
        $$
        S(f,z)=\frac{f'''}{f'}-\frac{3}{2}\left(\frac{f''}{f'}\right)^2.
        $$
    It is well known that the Schwarzian derivative is invariant under fractional linear transformations acting on the first argument, that is,
        $$
        S\left(\frac{af+b}{cf+d},z\right)=S(f,z),
        $$
    where $a,b,c,d$ are constants and $ad-bc \neq 0$. Moreover, if $S(f,z)=0$, then $f$ is a fractional linear transformation of $z$, see \cite[Theorem~10.1.2]{Hille1976Book}.
    
    Schwarzian derivative has an interesting chain rule. If $f$ and $g$ are meromorphic, then
        $$
        S(f\circ g,z)=S(f,g)(g')^2+S(g,z).
        $$
    It is obvious that $S(f\circ g,z)=S(g,z)$ provided $f$ is a
    fractional linear transformation. In addition, it implies that if $S(h,z)=S(g,z)$ for two non-constant
    meromorphic functions $g$ and $h$, then there exists a
    fractional linear transformation $f$ such that $h=f\circ g$.
    
    The Schwarzian derivative enters into several branches of complex analysis. It plays an important role in the theory
    of linear second-order differential equations \cite{Hille1976Book, Laine1993}, in conformal mapping, and in the study of univalent functions. For example, the Schwarzian derivative of meromorphic functions and the Schwarzian differential equations are closely connected to the study of the second-order linear differential equation
        \begin{equation}\label{secondorder.eq}
        f''+A(z)f=0,
        \end{equation}
    where $A$ is an entire function. In fact, the Schwarzian derivative of the quotient of its two linearly independent entire solutions in \eqref{secondorder.eq} is equal to $2A(z)$. Conversely, if the Schwarzian derivative of a function $w$ is equal to $2A(z)$, then one can find two linearly independent solutions $f_1$ and $f_2$ of \eqref{secondorder.eq} such that $w=f_1/f_2$. 

    The differential equation 
        \begin{equation}\label{SWDE}
            S(f,z)^m=R(z,f) = \frac{P(z,f)}{Q(z,f)}
        \end{equation}
        is known as the Schwarzian differential equation,
    where $P(z,f)$ and $Q(z,f)$ are irreducible polynomials in $f$ with meromorphic coefficients, and $m$ is a positive integer.
    Ishizaki \cite{ISZ1991} obtained some Malmquist-Yosida type theorems for \eqref{SWDE}. Furthermore, the existence of an admissible solution of $\eqref{SWDE}$ implies that $Q(z, f)$ must be one of the forms
         \begin{align}
                &Q(z, f)=c(z)(f+b_{1}(z))^{2 m}(f+b_{2}(z))^{2 m},\label{Sec.2.Lem.4.Q.E.1} \\
                &Q(z, f)=c(z)(f^{2}+a_{1}(z) f+a_{0}(z))^{2 m} ,\label{Sec.2.Lem.4.Q.E.2}\\
                &Q(z, f)=c(z)(f+b(z))^{2 m}, \label{Sec.2.Lem.4.Q.E.3}\\
                &Q(z, f)=c(z)(f+b(z))^{2 m}(f-\tau_{1})^{m}(f-\tau_{2})^{m},
                \label{Sec.2.Lem.4.Q.E.4}\\
                &Q(z, f)=c(z)(f+b(z))^{2 m}(f-\tau_{1})^{2 m / n},\quad &&n \mid(2m), n\ge 2, \label{Sec.2.Lem.4.Q.E.5}\\
                &Q(z,f)=c(z)(f-\tau_{1})^{m}(f-\tau_{2})^{m}(f-\tau_{3})^{m}(f-\tau_{4})^{m}, \label{Sec.2.Lem.4.Q.E.6}\\
                &Q(z, f)=c(z)(f-\tau_{1})^{m}(f-\tau_{2})^{m}(f-\tau_{3})^{2 m / n}, \quad &&n \mid (2 m), n \geq 2\label{Sec.2.Lem.4.Q.E.7},\\
                &Q(z, f)=c(z)(f-\tau_{1})^{m}(f-\tau_{2})^{2 m / 3}(f-\tau_{3})^{2 m / 3}, \quad &&3 \mid (2 m), \label{Sec.2.Lem.4.Q.E.8}\\
                &Q(z, f)=c(z)(f-\tau_{1})^{m}(f-\tau_{2})^{2 m / 3}(f-\tau_{3})^{2 m / 4}, \quad &&6 \mid m, \label{Sec.2.Lem.4.Q.E.9}\\
                &Q(z, f)=c(z)(f-\tau_{1})^{m}(f-\tau_{2})^{2 m / 3}(f-\tau_{3})^{2 m / 5}, \quad &&15 \mid (2 m), \label{Sec.2.Lem.4.Q.E.10}\\
                &Q(z, f)=c(z)(f-\tau_{1})^{m}(f-\tau_{2})^{2 m / 3}(f-\tau_{3})^{2 m / 6}, \quad &&3 \mid  m,\label{Sec.2.Lem.4.Q.E.11}\\
                &Q(z, f)=c(z)(f-\tau_{1})^{2 m / 3}(f-\tau_{2})^{2 m / 3}(f-\tau_{3})^{2 m / 3}, \quad &&3 \mid (2 m), \label{Sec.2.Lem.4.Q.E.12}\\
                &Q(z, f)=c(z)(f-\tau_{1})^{m}(f-\tau_{2})^{2 m / 4}(f-\tau_{3})^{2 m / 4}, \quad &&2 \mid m,  \label{Sec.2.Lem.4.Q.E.13}\\
                &Q(z, f)=c(z)(f-\tau_{1})^{2 m / n_{1}}(f-\tau_{2})^{2 m / n_{2}},  \qquad && n_{j}\mid(2 m), n_{j} \geq 2, \label{Sec.2.Lem.4.Q.E.14}\\
                &Q(z, f)=c(z)(f-\tau_{1})^{2 m / n}, \qquad &&n \mid (2 m), n \geq 2, \label{Sec.2.Lem.4.Q.E.15}\\
                &Q(z, f)=c(z),\label{Sec.2.Lem.4.Q.E.16}
            \end{align}
    where $c(z), a_{0}(z), a_{1}(z)$ are meromorphic functions, $\left|a_{0}'\right|+\left|a_{1}'\right| \neq 0, b_{1}(z),$ $b_{2}(z),$ $b(z)$  are nonconstant meromorphic functions, and  $\tau_{j}$ are distinct constants, $j=1, 2, 3, 4$. 
    Ishizaki~\cite{ISZ1991} obtained a Malmquist-Yosida type result which is a complete classification of \eqref{SWDE} when $P(z, f)$ and $Q(z, f)$ are irreducible polynomials in $f$ with constant coefficients, which is called an autonomous Schwarzian differential equation. The exact transcendental meromorphic solutions of autonomous Schwarzian differential equations were given by Liao et al., see \cite{Liao&Wu2022, LW2024}.
    Ishizaki \cite{ISZ1997} obtained the Malmquist-Yosida type theorem for Schwarzian differential equations \eqref{SWDE} when $m=1$. If Schwarzian differential equations \eqref{SWDE} with $m=1$ admit an admissible solution $f$, then by a suitable M$\mathrm{\ddot{o}}$bius transformation, $f$ satisfies a Riccati differential equation, or a first-order differential equation
        \begin{equation}\label{Thm.1.E.6}\tag{E.7}
            (f')^{2} + B(z,f)f' + A(z,f) = 0,
        \end{equation}
    where $A(z,f), B(z,f)$ are polynomials in $f$ with small meromorphic coefficients, or \eqref{SWDE} reduces into one of the following types
            \begin{align}
                S(f,z) &= \frac{P(z,f)}{(f+b(z))^{2}},\label{E.7}\\
                S(f,z) &= c(z),\notag
            \end{align}
        where $b(z), c(z)$ are small functions with respect to $f$. However, it is still open for non-autonomous Schwarzian differential equations when $m \ge 2$.
        It inspires us to consider Malmquist-Yosida type theorems for Schwarzian differential equations \eqref{SWDE} when $m \ge 2$. 
        Based on the discussion above, several questions arise naturally. For example 
        
        $\bullet$ What is the complete classification of Schwarzian differential equations \eqref{SWDE} with rational coefficients possessing a transcendental meromorphic solution?

        $\bullet$ Is it possible for the equation \eqref{E.7} to be reduced into a first-order differential equation?

    
    In this paper, we answer these two questions. The precise statements of
    results are as follows.

    \begin{theorem}\label{Main_Theorem}
        If the Schwarzian differential equation \eqref{SWDE} with rational coefficients admits a transcendental meromorphic solution $f$, then by a suitable M$\ddot{o}$bius transformation $u = (af + b)/(cf + d), ad-bc \neq 0,$ $u$ satisfies a Riccati differential equation with small meromorphic coefficients, or one of  the six types of first-order differential equations \eqref{1.2}-\eqref{Thm.1.E.6}, or $u$ satisfies one of the following types
         \begin{align}
        	S(u,z)^{2}&=c(z)\frac{(u-\alpha_{1})}{(u-\tau_{1})},\label{Thm.1.E.7}\tag{E.8}\\
        	S(u,z)^{2}&=c(z)\frac{(u-\alpha_{1})(u-\alpha_{2})}{(u-\tau_{1})^{2}},\label{Thm.1.E.8}\tag{E.9}\\
        	S(u,z)^{3}&=c(z)\frac{(u-\alpha_{1})^{2}}{(u-\tau_{1})^{2}},\label{Thm.1.E.10}\tag{E.10}\\
        	S(u,z)^{2}&=c(z)\frac{(u-\alpha_{1})^{2}}{(u-\tau_{1})(u-\tau_{2})},\label{Thm.1.E.11}\tag{E.11}\\
        	S(u,z)^{2}&=c(z)\frac{(u-\alpha_{1})(u-\alpha_{2})(u-\alpha_{3})^{2}}{(u-\tau_{1})^{2}(u-\tau_{2})^{2}},\label{Thm.1.E.13}\tag{E.12}	\\
        	S(u,z)^{3}&=c(z)\frac{(u-\alpha_{1})(u-\alpha_{2})^{3}}{(u-\tau_{1})^{2}(u-\tau_{2})^{2}},\label{Thm.1.E.16}	\tag{E.13}\\
        	S(u,z) &= c(z),\label{E.14}\tag{E.14}
        \end{align}
        where $\tau_{1}, \tau_{2}$ are distinct constants, $c(z)$ is rational function, and $\alpha_{1}, \alpha_{2}, \alpha_{3}$ are algebraic functions, not necessarily distinct.
    \end{theorem}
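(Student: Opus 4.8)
The plan is to treat the denominator classification \eqref{Sec.2.Lem.4.Q.E.1}--\eqref{Sec.2.Lem.4.Q.E.16} as the available input and to pin down the numerator $P$, the distinguished points, and the reduced normal form by combining Nevanlinna-theoretic degree bounds, a local ramification analysis of the Schwarzian derivative, and Möbius normalization. First I would bound $\deg_f P$. Since $m(r,S(f,z)) = S(r,f)$ by the lemma on the logarithmic derivative, and the only poles of $S(f,z)$ are double poles at the ramified values of $f$ (counted by $N_{\mathrm{ram}}(r,f) \le 2T(r,f) + S(r,f)$), one gets $T(r,S(f,z)) \le 4T(r,f) + S(r,f)$. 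Applying Valiron--Mohon'ko to \eqref{SWDE} gives $m\,T(r,S(f,z)) = \deg_f R\cdot T(r,f) + S(r,f)$ with $\deg_f R = \max(\deg_f P, \deg_f Q)$, whence $\deg_f R \le 4m$; together with \eqref{Sec.2.Lem.4.Q.E.1}--\eqref{Sec.2.Lem.4.Q.E.16} this bounds $\deg_f P$ and leaves only finitely many shapes for $R(z,f)$.

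The second step is a local matching argument at each distinguished point. If $f$ attains a value $a$ with multiplicity $k$ at $z_0$, the universal expansion $S(f,z) \sim \tfrac{1-k^2}{2}(z-z_0)^{-2}$ shows that $S(f,z)^m$ has a pole of order exactly $2m$ there when $k \ge 2$, and is holomorphic when $k = 1$. Comparing with the order of $R(z,f) = P/Q$ at $f = \tau_j$, which equals the exponent $e_j$ of $(f-\tau_j)$ in $Q$ since $P$ and $Q$ are coprime, forces $e_j k = 2m$; this is the origin of the exponents $e_j = 2m/n$ appearing in \eqref{Sec.2.Lem.4.Q.E.1}--\eqref{Sec.2.Lem.4.Q.E.16} and of the divisibility constraints $n \mid (2m)$. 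The same computation at $f = \infty$ shows that $\infty$ is unramified, hence not a pole of $R$, exactly when $\deg_f P = \deg_f Q$, which is the balance satisfied by \eqref{Thm.1.E.7}--\eqref{Thm.1.E.16}. The numerator is thereby determined up to its zeros $\alpha_j$, and since these are roots of a polynomial with rational coefficients they are algebraic functions, as claimed.

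The third step is to normalize by a Möbius transformation and collapse the list onto the stated forms. Using the invariance $S((af+b)/(cf+d),z) = S(f,z)$ under constant transformations, I would move the finitely many distinguished points of each denominator into standard position. The forms \eqref{Sec.2.Lem.4.Q.E.6}--\eqref{Sec.2.Lem.4.Q.E.16}, whose distinguished points are constants $\tau_j$ only, then match directly against the second-order normal forms \eqref{Thm.1.E.7}--\eqref{E.14} once the admissible numerator exponents are read off from Step 2; the surviving exponent tuples are precisely those compatible with both $e_j k = 2m$ and the balance $\deg_f P = \deg_f Q$, which is why the list \eqref{Thm.1.E.7}--\eqref{E.14} is finite. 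The forms \eqref{Sec.2.Lem.4.Q.E.1}--\eqref{Sec.2.Lem.4.Q.E.5}, which carry a nonconstant coefficient $b(z)$, $b_j(z)$ or $a_j(z)$ and hence a moving distinguished locus, are the degenerate cases, and here I would instead prove that $u$ satisfies a first-order algebraic differential equation; Steinmetz's classification then yields a Riccati equation or one of \eqref{1.2}--\eqref{1.6}, the residual quadratic case being \eqref{Thm.1.E.6}.

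The step I expect to be the main obstacle is precisely this last reduction, which is also the improvement over \cite[Theorem~1.1]{ISZ1997}: showing that a solution of the moving-coefficient Schwarzian equation is forced to obey a first-order equation. The mechanism I would exploit is the correspondence between $S(f,z) = 2A$ and the linear equation $w'' + Aw = 0$: writing $u = w_1/w_2$ gives $u' = W/w_2^2$ with constant Wronskian $W$, so that $w_2^2 = W/u'$ and the relation $2A = R(z,u)^{1/m}$ becomes an algebraic relation between $u$ and $u'$. Converting this into a genuine polynomial first-order equation with small coefficients, and checking that the multivaluedness introduced by the root $R^{1/m}$ and by the algebraic $\alpha_j$ is compatible with $u$ being single-valued, is the delicate point. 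Finally, verifying exhaustiveness of \eqref{Thm.1.E.7}--\eqref{E.14} reduces to a finite enumeration of the exponent tuples allowed by Step 2, which I would carry out case by case.
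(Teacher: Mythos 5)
Your Steps 1 and 2 are sound but only rederive the cited input: the degree bound and the local matching $e_jk=2m$ are exactly how the classification \eqref{Sec.2.Lem.4.Q.E.1}--\eqref{Sec.2.Lem.4.Q.E.16} of $Q$ is obtained, which the paper simply quotes from Ishizaki. The genuine gap is in Step 3, where you claim the constant-$\tau_j$ denominators ``match directly'' onto \eqref{Thm.1.E.7}--\eqref{E.14} by a finite enumeration of exponent tuples. Local pole matching constrains only the denominator; it says nothing about the multiplicities $k_i$ of the numerator zeros $\alpha_i$, which are \emph{small algebraic functions} attained by $u$ at points where $S(u,z)$ vanishes. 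The paper pins these down by a value-distribution argument absent from your proposal: Yamanoi's truncated second main theorem for small functions \eqref{SFThm} shows at most one $\alpha_i$ can be completely ramified and none Picard-exceptional, and explicit auxiliary functions built from $u''/u'$ and logarithmic derivatives of the leading coefficient (the $h$, $h_1$, $h_2$ of Lemmas~\ref{Pf.M.Thm.lem.6}--\ref{Pf.M.Thm.lem.8}, whose smallness follows from Laurent matching of the subleading coefficient, e.g.\ \eqref{Pf.1.Lem.3.7_E.2}) yield $N_{(4}(r,\alpha_i,u)=S(r,u)$; only then does the enumeration close. Your partition of outcomes is also wrong: \eqref{Sec.2.Lem.4.Q.E.6} and \eqref{Sec.2.Lem.4.Q.E.11}--\eqref{Sec.2.Lem.4.Q.E.13} do not produce Schwarzian normal forms but the first-order Steinmetz equations \eqref{1.3}--\eqref{1.6}, obtained directly by showing Yosida-type quotients like $(f')^{2}/\prod_j(f-\tau_j)$ are small; \eqref{Sec.2.Lem.4.Q.E.7} reduces to \eqref{Thm.1.E.6} via Ishizaki's theorem; \eqref{Sec.2.Lem.4.Q.E.8}--\eqref{Sec.2.Lem.4.Q.E.10} are eliminated outright (the degree balance forces sums like $\sum_i\beta_i m = 7m/3$, impossible); \eqref{Sec.2.Lem.4.Q.E.5} admits no solutions at all; and the hardest case \eqref{Sec.2.Lem.4.Q.E.14} requires a separate proof that $n_1=n_2$, done in the paper by showing $g=u'/u$ satisfies a first-order algebraic ODE and invoking Steinmetz's 1980 classification together with Clunie's lemma. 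None of these outcomes is reachable by your enumeration.

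The mechanism you propose for the moving-coefficient cases \eqref{Sec.2.Lem.4.Q.E.1}--\eqref{Sec.2.Lem.4.Q.E.3} also fails as stated. Writing $u=w_1/w_2$ with constant Wronskian gives $w_2=(W/u')^{1/2}$, but substituting this into $w_2''+Aw_2=0$ with $2A=R(z,u)^{1/m}$ merely reproduces the identity $S(u,z)=2A$, i.e.\ the original third-order equation: since $A$ depends on $u$, no reduction of order occurs, and no algebraic relation between $u$ and $u'$ alone emerges. The paper's Lemma~\ref{Pf.M.Thm.lem.4} instead carries out local Taylor matching at the \emph{moving} zeros of $u+b(z)$ (forcing $c_j=b_j$ for $1\le j\le k-1$ and the identities \eqref{Pf.1.Lem.3.4_E.2}--\eqref{Pf.1.Lem.3.4_E.3}), then constructs the explicit small functions $h_1$ of \eqref{E_h_{1}} and $h_2$ of \eqref{Pf.M.Thm.Lem.4.E.4}, splitting on whether $1/(b')^{2m}-(-3/2)^m/P_0(z,-b)$ vanishes identically; smallness of these functions is what forces the Riccati equation. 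You correctly identified this reduction as the main obstacle and the improvement over Ishizaki, but the tool you chose cannot overcome it.
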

	 
    \begin{theorem}\label{Main_Theorem_2}
        If the Schwarzian differential equation \eqref{SWDE} admits an admissible meromorphic solution $f$ when $m=1$, then by a suitable M$\ddot{o}$bius transformation $u = (af + b)/(cf + d), ad-bc \neq 0,$ $u$ satisfies a Riccati differential equation, or a first order differential equation of the form \eqref{Thm.1.E.6}, or              
            \begin{equation*}
                S(u,z) = c(z),
            \end{equation*}
         where $c(z)$ is a small function.
        \end{theorem}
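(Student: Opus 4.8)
The plan is to derive Theorem~\ref{Main_Theorem_2} from Ishizaki's classification \cite{ISZ1997} by showing that its only genuinely new alternative, the equation \eqref{E.7}, collapses into one of the admissible forms. By Ishizaki's Theorem~1.1, after a Möbius transformation the admissible solution $f$ of \eqref{SWDE} with $m=1$ satisfies a Riccati equation, an equation of type \eqref{Thm.1.E.6}, the relation $S(f,z)=c(z)$, or \eqref{E.7}, namely $S(f,z)=P(z,f)/(c(z)(f+b)^{2})$ with $P$ and $Q=c(f+b)^{2}$ coprime; here $b$ is necessarily nonconstant, since for $m=1$ a constant double root would require exponent $2$ in the forbidden range of case \eqref{Sec.2.Lem.4.Q.E.15}, so this is case \eqref{Sec.2.Lem.4.Q.E.3}. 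The first three alternatives already lie in the desired conclusion, so it suffices to prove that every admissible solution of \eqref{E.7} satisfies a Riccati equation, an equation of type \eqref{Thm.1.E.6}, or $S(u,z)=c(z)$ after a small-coefficient Möbius transformation.

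First I would carry out the local principal-part analysis of \eqref{E.7}. At a pole of $f$ of order $k$ the Schwarzian has the double pole $\tfrac{1-k^{2}}{2}(z-z_{0})^{-2}+\cdots$, and at a point where $f$ is locally $\ell$-to-$1$ it has the double pole $-\tfrac{(\ell-1)(3\ell-1)}{2}(z-z_{1})^{-2}+\cdots$. Comparing these with the divisor of $P/(c(f+b)^{2})$ and using $P(z,-b)\not\equiv 0$ (coprimality) forces three things: (i) every finite critical value of $f$ equals $-b(z)$, because a critical point over any other finite value would create a double pole of $S(f,z)$ with no counterpart on the right-hand side; (ii) every $-b$-point is a critical point, and a counting argument shows the critical index is a constant $\ell_{0}$ and the pole order a constant $k_{0}$ (a varying index would force the small function $\Phi_{0}:=P(z,-b)/(c\,b'^{2})$ to equal two distinct constants $-\tfrac{(\ell-1)(3\ell-1)}{2}$ on sets whose counting functions are comparable to $T(r,f)$, impossible unless $\Phi_{0}$ is identically one such constant); and (iii) coefficient matching yields $P(z,-b)=-\tfrac{(\ell_{0}-1)(3\ell_{0}-1)}{2}\,c\,b'^{2}$ together with a relation constraining $\deg_{f}P$ and $k_{0}$ (for instance $k_{0}(\deg_{f}P-2)=2$ when $\deg_{f}P>2$), so that only finitely many admissible ramification data $(k_{0},\ell_{0},\deg_{f}P)$ survive.

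With this rigidity in hand I would construct the first-order equation. The content of (i)--(ii) is that $f$ is ramified only over the two values $-b(z)$ and $\infty$: the zeros of $f'$ occur exactly at the $-b$-points (to order $\ell_{0}-1$) and its poles exactly at the poles of $f$ (to order $k_{0}+1$). I would then form an auxiliary function built from $f''/f'$ together with the logarithmic derivatives of $f+b$ and of suitable polynomials in $f$, chosen so that all of its simple poles at the $-b$-points and at the poles of $f$ cancel. The lemma on the logarithmic derivative bounds its proximity by $S(r,f)$, and with the pole count now trivial this forces the auxiliary function to be a small function with respect to $f$. Clearing denominators turns the resulting identity into an algebraic relation of degree at most two between $f'$ and $f$ with small meromorphic coefficients, i.e. a Riccati equation or an equation of type \eqref{Thm.1.E.6}, while the degenerate configurations, where the ramification collapses, give $S(u,z)=c(z)$. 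Running this for each admissible datum from the previous step completes the classification.

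I expect the main obstacle to be exactly this last reduction, because $-b(z)$ is a \emph{moving} value. The order of vanishing of $f+b$ at a $-b$-point is governed by $b'$ rather than by the ramification index $\ell_{0}$ (indeed $f+b$ has a simple zero there even though $f$ is $\ell_{0}$-to-$1$), so the clean correspondence between two-value ramification and an explicit low-degree first-order equation available in the autonomous, constant-coefficient situation is no longer automatic, and one is driven to the genuinely quadratic form \eqref{Thm.1.E.6}. Arranging the auxiliary function so that \emph{every} principal part cancels in the presence of this moving target, and controlling the exceptional sets where $b'=0$ or $P(z,-b)=0$ as well as the possible deficiency of the values $-b$ and $\infty$ (the case $\overline{N}(r,1/(f+b))=S(r,f)$ must be treated on its own), is where the real work lies. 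By comparison, the bookkeeping of the preceding step, establishing constant indices and the finite list of admissible data, is routine.
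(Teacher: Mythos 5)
Your top-level route coincides exactly with the paper's: invoke Ishizaki's Theorem~1.1 for $m=1$ and then show that the residual alternative \eqref{E.7}, i.e.\ $Q$ of the form \eqref{Sec.2.Lem.4.Q.E.3}, collapses. Indeed, the paper's entire proof of Theorem~\ref{Main_Theorem_2} is the remark that the proof of Lemma~\ref{Pf.M.Thm.lem.4} goes through with small meromorphic coefficients and reduces \eqref{E.7} to a Riccati equation (note: always a Riccati equation, sharper than your three-way outcome). The genuine gap in your proposal is that the heart of this step is never carried out: you postulate an auxiliary function ``chosen so that all of its simple poles cancel'' and then yourself concede that arranging this cancellation at the moving value $-b(z)$ is ``where the real work lies.'' That construction is the whole content of the proof. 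The paper supplies it explicitly via a dichotomy: the function $h_{1}$ of \eqref{E_h_{1}} is \emph{linear} in $u'$, with $u'$-coefficient $-\frac{1}{b'}\bigl(\frac{1}{(b')^{2m}}-\frac{(-3/2)^{m}}{P_{0}(z,-b)}\bigr)\frac{1}{(u+b)^{2}}$. If this small function is not identically zero, the pointwise Laurent relation \eqref{Pf.1.Lem.3.4_E.2} shows $h_{1}$ is analytic at almost all zeros of $u+b$ (checked separately for $k=2$ and $k>2$), so $T(r,h_{1})=S(r,u)$, and multiplying $h_{1}=\text{small}$ by $(u+b)^{2}$ and solving for $u'$ gives a Riccati equation outright. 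If it vanishes identically, then \eqref{Pf.1.Lem.3.4_E.2} forces $k=2$ at almost all $-b$-points --- the index rigidity you want, obtained pointwise and for free --- after which $h_{2}$ of \eqref{Pf.M.Thm.Lem.4.E.4} together with the second-order coefficient relation \eqref{Pf.1.Lem.3.4_E.3} determines $c_{2}$ by a small function and again yields a Riccati equation.

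Two further points undercut your substitute arguments as written. First, your rigidity step (ii) is unsound in the stated generality: at a $-b$-point of index $\ell$ one gets $\Phi_{0}(z_{0})=\frac{1-\ell^{2}}{2}$, so if $\Phi_{0}$ is non-constant each index class has counting function $S(r,f)$; but $\ell$ is a priori unbounded, and infinitely many small classes do not contradict $\overline{N}(r,-b,f)$ being large without a uniform bound on $\ell$ --- and when $-b$ is deficient, $\overline{N}(r,-b,f)=S(r,f)$, your counting argument yields nothing at all. You defer precisely these cases, whereas the paper's dichotomy never uses largeness of $N(r,-b,f)$: its case split is on an \emph{identity} between small functions, not on counting functions. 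Second, a concrete computational error: at a point where $f$ is locally $\ell$-to-one, the double pole of $S(f,z)$ has coefficient $\frac{1-\ell^{2}}{2}$ --- the same as at a pole of order $\ell$, as M\"obius invariance of the Schwarzian demands --- not $-\frac{(\ell-1)(3\ell-1)}{2}$. Accordingly your matching should read $P(z,-b)=\frac{1-\ell_{0}^{2}}{2}\,c\,(b')^{2}$, cf.\ \eqref{Pf.1.Lem.3.4_E.2} with $m=1$; the wrong constant would corrupt exactly the pivot quantity $(-3/2)^{m}/P_{0}(z,-b)$ on which the correct case split rests.
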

        
        \textbf{Notation.} 
        The number of poles with multiplicity $\ge M$ (resp. $\le M$, $=M$) of $f$ in the disc $|z| \le r$ is denoted by $n_{(M}(r,f)$ (resp. $n_{M)}(r,f)$, $n_{(M)}(r,f)$).
       Its counting function is denoted by $N_{(M}(r,f)$ (resp. $N_{M)}(r,f)$, $N_{(M)}(r,f)$). 
       Let $n(r, 0; f)_{g}$ be the number of common zeros of $f$ and $g$ in $|z|\le r$ according to the multiplicity of zeros of $f$. The counting function $N(r, 0, f)_{g}$ is defined in the usual way.
        

\section{Generalization of Nevanlinna's second main theorem}

	Let $\mathcal{M}$ be the field of non-constant meromorphic functions in the complex plane. Denote 
	$$
	S(r,f) = o(T(r,f))
	$$
	as $r \to \infty$ for $r \not\in E,$ where $E$ is a set with finite logarithmic measure, i.e. $\int_{E} \frac{dt}{t} < \infty$.
	Suppose that $f \in \mathcal{M}$, we define  
        \begin{equation*}
            \mathcal{S}(f) := \{ a \in \mathcal{M} : T(r,a) = S(r,f)\}.
        \end{equation*}
    The truncated version of  Nevanlinna second main theorem for small functions was given by Yamanoi~\cite[Corollary 1]{YK2004}
        \begin{equation}\label{SFThm}
    	(q-2)T(r,f) \le \sum_{j=1}^{q}\overline{N}(r,a_{j},f) + S(r,f),
    \end{equation}
    where $a_{j} \in  \mathcal{S}(f)$ for $j=1, \ldots, q.$
    We say $a\in\mathcal{S}(f)$ is a Picard exceptional small function of $f$ when $N(r,a,f)=S(r,f).$ A nonconstant meromorphic function $f$ has at most two Picard exceptional small functions. Following \cite{Zhang2020}, $a \in \mathcal{S}$ is called a \emph{completely ramified small function} of $f$ if 
    	$$
    	N\left(r, a, f\right) = N_{(2}\left(r, a, f\right) + S(r,f).
    	$$
   	A non-constant meromorphic function $f$ has at most four completely ramified small functions by the truncated version of  Nevanlinna second main theory for small functions.

   	We follow the reasoning in \cite{Zhang2020}, and note that the inequality \eqref{SFThm} holds for small algebraic functions $a_{j}$ in our paper. Such functions have at most finitely many algebraic branch points. All algebraic functions we need to consider in this paper are small functions with respect to a meromorphic solution $f$ of \eqref{SWDE}. Such functions could be described as ``almost meromorphic" in the sense of Nevanlinna theory, since the presence of branch points actually only affects the small error term $S(r,f)$ in any of the estimates involving Nevanlinna functions. Correspondingly, $T(r,f)$ and $N(r,f)$ will denote the characteristic and counting functions of a finite-sheeted algebroid function $f$, and similarly with the rest of the Nevanlinna functions involving $f$ (see, e.g., \cite{K1993}).

\section{Proof of Theorem~\ref{Main_Theorem}}
    Suppose that the Schwarzian differential equation \eqref{SWDE} with rational coefficients admits a transcendental meromorphic solution $f$. We assume that $\deg_{f} P(z,f) = \deg_{f} Q(z,f),$ by applying a M$\mathrm{\ddot{o}}$bius transformation if necessary. 
    Let $z_{0}$ be a pole of $f$ with multiplicity $k\ge 2$, such that $z_{0}$ is neither a zero nor a pole of the coefficients of $P(z,f)$ and $Q(z,f)$. By \eqref{SWDE},  $z_{0}$ must be a simple pole of $f$,  which is impossible. Therefore, almost all poles of $f$ are simple and we have 
	\begin{equation}\label{Eq.1}
		N_{(2}(r,f) = S(r,f).		
	\end{equation}
    By \cite[Theorem~1~(ii)]{ISZ1995}, we obtain
	\begin{equation}\label{Eq.2}
		N(r,0;P)_{Q} = S(r,f),
	\end{equation}
    where $Q:= Q(z,f)$ and $P := P(z,f)$.
	
    We know that if \eqref{SWDE} with rational coefficients admits a transcendental meromorphic solution $f$, then $Q(z,f)$ must be one of the forms \eqref{Sec.2.Lem.4.Q.E.1}-\eqref{Sec.2.Lem.4.Q.E.16} by \cite[Theorem~2]{ISZ1991}. We prove Theorem~\ref{Main_Theorem} separately according to the cases above.


    \begin{lemma}\label{Pf.M.Thm.Lem.1}
        Suppose the conditions of Theorem~\ref{Main_Theorem} hold, and $Q(z,f)$ is of the form \eqref{Sec.2.Lem.4.Q.E.1} or \eqref{Sec.2.Lem.4.Q.E.2}. Then by a suitable M$\ddot{o}$bius transformation $u = (af + b)/(cf + d), ad-bc \neq 0,$ $u$ satisfies a Riccati differential equation.
    \end{lemma}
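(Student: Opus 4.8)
The plan is to analyze the two cases for $Q(z,f)$ by locating the poles of $u$ (or of a transformed function) and showing they force a Riccati structure. Both forms \eqref{Sec.2.Lem.4.Q.E.1} and \eqref{Sec.2.Lem.4.Q.E.2} have $Q$ equal (up to the unit $c(z)$) to a perfect $2m$-th power of a monic quadratic in $f$: in case \eqref{Sec.2.Lem.4.Q.E.1} the quadratic $(f+b_1)(f+b_2)$ has distinct moving roots $-b_1,-b_2$, while in case \eqref{Sec.2.Lem.4.Q.E.2} the quadratic $f^2+a_1 f+a_0$ is irreducible over the small-function field (the condition $|a_0'|+|a_1'|\neq 0$ keeping it genuinely moving). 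The key structural fact is that $u=-b_j$ (respectively the two roots of the quadratic) are the only candidate multiple poles of $S(u,z)$ coming from the right-hand side, and I want to exploit that the Schwarzian has poles of a very restricted type.

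**First I would** set up the pole analysis. By \eqref{Eq.1} almost all poles of $f$ are simple, and by \eqref{Eq.2} the resultant condition $N(r,0;P)_Q=S(r,f)$ means $P$ and $Q$ share essentially no zeros, so near a generic zero of $Q$ the right-hand side $R(z,f)$ has a genuine pole. The idea is to send the two roots of the quadratic factor in $Q$ to $0$ and $\infty$ by the Möbius transformation $u=(af+b)/(cf+d)$, so that in the new variable the ``bad'' points are $u=0$ and $u=\infty$. In case \eqref{Sec.2.Lem.4.Q.E.1} this is immediate since the roots $-b_1,-b_2$ are distinct; in case \eqref{Sec.2.Lem.4.Q.E.2} the two roots are conjugate small algebraic functions, and after the transformation they become $0,\infty$ while the coefficients stay in $\mathcal{S}(u)$ (here I use the closure of $\mathcal{S}(f)$ under the algebraic operations and the almost-meromorphic framework set up at the end of Section~2). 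The goal of this reduction is to arrange that $u$ has at most simple poles and simple zeros outside an $S(r,u)$-set, and that $0,\infty$ are the only Picard-type exceptional values contributing multiplicities.

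**The core step** is then to show that $\psi:=u'/[u(\,\cdot\,)]$ or, more precisely, the logarithmic-derivative combination $w:=u'/u$ together with the pole structure forces $u$ to satisfy $u'=\alpha+\beta u+\gamma u^2$ with coefficients in $\mathcal{S}(u)$. Concretely, I would examine the local Laurent expansion of $u$ at a generic simple pole and at a generic zero: since $Q$ is a pure $2m$-th power, the Schwarzian $S(u,z)^m$ has, at such points, a pole whose order is pinned down, and comparing this with the general fact that $S(u,z)$ has a pole of order exactly $2$ at a simple pole of $u$ (and is regular, up to small-function contributions, at a simple zero) shows that the multiplicities on the two sides match only when the zeros and poles of $u$ are all simple (counted outside an $S(r,u)$-set). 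From $N_{(2}(r,u)=S(r,u)$ and the analogous statement for zeros, the standard Malmquist-argument produces a rational relation: the quantity $u'/(u^2)$, $u'/u$, and $u'$ each have controlled poles, and a second-main-theorem count via \eqref{SFThm} applied to the values $0,\infty$ (and possibly the fixed points of the transformation) shows the putative Riccati coefficients are small functions.

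**The hard part will be** the bookkeeping at the zeros and poles of $u$ in case \eqref{Sec.2.Lem.4.Q.E.2}, where the quadratic is irreducible and its roots are genuinely two-valued algebraic functions: one must verify that passing to these branches does not destroy the estimate $N(r,0;P)_Q=S(r,f)$ and that the branch points contribute only to $S(r,u)$, which is exactly the point the authors flagged in Section~2 about treating small algebraic functions as almost-meromorphic. A secondary obstacle is excluding the possibility that the matching of pole orders forces a genuinely higher-order (non-Riccati) first-order equation instead; here I would argue that the perfect $2m$-th-power shape of $Q$, combined with $\deg_f P=\deg_f Q$, leaves no room for the extra ramification that the forms \eqref{Thm.1.E.7}--\eqref{E.14} require, so the Riccati conclusion is the only survivor. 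Once the coefficients are shown to lie in $\mathcal{S}(u)$, the equation $u'=\alpha+\beta u+\gamma u^2$ is precisely a Riccati differential equation with small meromorphic coefficients, completing the lemma.
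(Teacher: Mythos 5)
Your central reduction fails at the first step: the roots you want to normalize are \emph{moving}. In \eqref{Sec.2.Lem.4.Q.E.1} the paper explicitly takes $b_{1}(z), b_{2}(z)$ to be nonconstant meromorphic functions (and in \eqref{Sec.2.Lem.4.Q.E.2} the roots of $f^{2}+a_{1}f+a_{0}$ are nonconstant algebraic functions, since $|a_{0}'|+|a_{1}'|\neq 0$), so no M\"obius transformation $u=(af+b)/(cf+d)$ with \emph{constant} $a,b,c,d$ can send them to $0$ and $\infty$; and constancy is forced on you, because the invariance $S((af+b)/(cf+d),z)=S(f,z)$ that keeps the equation in the form \eqref{SWDE} holds only for constant coefficients. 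A $z$-dependent transformation changes the Schwarzian by additional terms involving derivatives of the coefficients and destroys the hypothesis, so the whole ``bad points are $u=0,\infty$'' picture, and the pole/zero bookkeeping you build on it, is unavailable. A secondary error: your matching principle is backwards --- $S(u,z)$ is \emph{analytic} at a simple pole of $u$ (M\"obius invariance makes simple poles unexceptional); its double poles occur at multiple poles of $u$ and at zeros of $u'$ where $u$ is analytic. The relevant points here are not poles of $f$ at all (by the normalization $\deg_{f}P=\deg_{f}Q$ the right side of \eqref{SWDE} stays finite at poles of $f$), but the points where $f$ meets $-b_{1}$, $-b_{2}$, resp.\ the roots of the quadratic: there \eqref{Eq.2} makes the right-hand side genuinely infinite of order $2m\ell$, where $\ell$ is the multiplicity of the zero of $f+b_{j}$, while the left-hand side, $f$ being analytic, can have a pole only if $f'$ vanishes, and then the pole has order exactly $2m$; hence $\ell=1$.

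The paper's proof needs none of this machinery. It defines directly $\varphi_{1}=f'/\bigl((f+b_{1})(f+b_{2})\bigr)$, resp.\ $\varphi_{2}=f'/(f^{2}+a_{1}f+a_{0})$; the local analysis just sketched, together with \eqref{Eq.1}, shows that almost all zeros of $Q(z,f)$ and almost all poles of $f$ are analytic points of $\varphi_{j}$, so $N(r,\varphi_{j})=S(r,f)$, and the proximity estimate $m(r,\varphi_{j})=S(r,f)$ comes from the lemma on the logarithmic derivative applied to \eqref{SWDE} combined with Ishizaki's Lemma~2.4 --- not, as you propose, from the second main theorem \eqref{SFThm}, which plays no role in this lemma. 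Then $T(r,\varphi_{j})=S(r,f)$, and $f'=\varphi_{j}\cdot(\text{quadratic in } f)$ is already the Riccati equation; keeping the quadratic factor intact also sidesteps the two-valued-branch complications you flag in case \eqref{Sec.2.Lem.4.Q.E.2}, and no argument excluding the forms \eqref{Thm.1.E.7}--\eqref{E.14} is required.
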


    \begin{proof}
        By \eqref{SWDE} and \eqref{Eq.2}, we conclude that almost all zeros of $Q(z,f)$ are zeros of $f'$. Hence, we have almost all zeros of $Q(z,f)$ of multiplicity $2m$. Define 
            \begin{align*}
                \varphi_{1}(z) &:= \frac{f'}{(f+b_{1}(z))(f+b_{2}(z))}, \quad 	\text{if $Q(z,f)$ is of the form \eqref{Sec.2.Lem.4.Q.E.1},}\\
                \varphi_{2}(z) &:= \frac{f'}{f^{2}+a_{1}(z)f+a_{0}(z)}, 	\quad\quad \text{if $Q(z,f)$ is of the form \eqref{Sec.2.Lem.4.Q.E.2}.}
            \end{align*}
        Then almost all zeros of $Q(z,f)$ are analytic points of $\varphi_{j}, j=1, 2.$ 
        It follows from \eqref{Eq.1} that almost all poles of $f$ are also analytic points of $\varphi_{j}, j=1,2.$
        Hence, we obtain $N(r,\varphi_{j})=S(r,f), j=1, 2.$ Using the Lemma of logarithmic derivative, from \eqref{SWDE} we have that $m(r,R)= S(r,f)$ in each case. By \cite[Lemma~2.4 (i)]{ISZ1997}, we get $m(r,1/Q)$$= S(r,f)$. By \cite[Lemma~2.4 (ii)]{ISZ1997}, we conclude that $m(r,\varphi_{j}) =S(r,u), j=1, 2.$ Therefore, $T(r,\varphi_{j})=S(r,f), j =1, 2.$  It implies that $f$ satisfies a Riccati differential equation.
    \end{proof}

    \begin{lemma}\label{Pf.M.Thm.Lem.2}
        Suppose the conditions of Theorem~1.1 hold, and $Q(z,f)$ is of the form
        \eqref{Sec.2.Lem.4.Q.E.4} or \eqref{Sec.2.Lem.4.Q.E.6}. Then by a suitable M$\ddot{o}$bius transformation $u = (af + b)/(cf + d), ad-bc \neq 0,$ $u$ satisfies one of equations \eqref{1.2}-\eqref{1.3}.
    \end{lemma}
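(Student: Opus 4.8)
The plan is to handle the two cases \eqref{Sec.2.Lem.4.Q.E.4} and \eqref{Sec.2.Lem.4.Q.E.6} by constructing an auxiliary function whose characteristic is controlled, exactly as in Lemma~\ref{Pf.M.Thm.Lem.1}, but now producing a \emph{square} relation between $f'$ and a polynomial in $f$ rather than a Riccati equation. Since in both forms every finite branch point $\tau_j$ appears to the power $m$ (and $b(z)$ to the power $2m$ in \eqref{Sec.2.Lem.4.Q.E.4}), the key structural fact I would first establish is the ramification behaviour of $f$ over the constants $\tau_1,\tau_2,\tau_3,\tau_4$. Writing $S(f,z)^m = P/Q$ and examining a point $z_1$ where $f(z_1)=\tau_j$ but $z_1$ avoids the (finitely many) zeros and poles of the rational coefficients, I would compare the order of vanishing of $S(f,z)^m$ forced by the factor $(f-\tau_j)^m$ in $Q$ against the order contributed by the Schwarzian. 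The Schwarzian $S(f,z)=f'''/f'-\tfrac32(f''/f')^2$ has a pole of order $2$ at any point where $f-\tau_j$ has a zero of order $\ge 2$, and is analytic where that zero is simple; this local analysis should show that almost all $\tau_j$-points of $f$ are multiple, i.e.\ $N(r,\tau_j,f)=N_{(2}(r,\tau_j,f)+S(r,f)$, so that each $\tau_j$ is a completely ramified small (here constant) value of $f$.

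Once ramification is in hand, I would introduce
\begin{equation*}
    \psi(z) := \frac{(f')^{2}}{(f-\tau_1)(f-\tau_2)(f-\tau_3)(f-\tau_4)}
\end{equation*}
in case \eqref{Sec.2.Lem.4.Q.E.6}, and the analogous quotient with the repeated factor $(f-b(z))^{2}$ and the two simple factors $(f-\tau_1)(f-\tau_2)$ in case \eqref{Sec.2.Lem.4.Q.E.4}. The aim is to prove $T(r,\psi)=S(r,f)$, from which $\psi$ is a small function and $f$ satisfies an equation of the type \eqref{1.3} (respectively \eqref{1.2}). To bound the characteristic I would split $T(r,\psi)=m(r,\psi)+N(r,\psi)$: the proximity function $m(r,\psi)$ is $S(r,f)$ by the lemma of the logarithmic derivative together with the $m(r,R)=S(r,f)$ bound obtained from \eqref{SWDE} as in the previous lemma, while the counting function $N(r,\psi)$ is controlled by checking that poles of $f$ (almost all simple by \eqref{Eq.1}), the $\tau_j$-points of $f$ (almost all double, by the ramification step), and the zeros of $f'$ either cancel in $\psi$ or only contribute to $S(r,f)$. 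The bookkeeping is where the factor $(f')^{2}$ in the numerator is essential, since it supplies the exact zero multiplicity needed to neutralise the ramified $\tau_j$-points.

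The main obstacle I anticipate is the pole/zero accounting at the $\tau_j$-points and at zeros of $f'$, in particular verifying that the repeated factor $(f-b(z))^{2m}$ in form \eqref{Sec.2.Lem.4.Q.E.4} does not introduce uncontrolled poles of $\psi$; because $b(z)$ is nonconstant I would need to argue that $b(z)$-points of $f$ contribute only $S(r,f)$, using that $b$ is a small (meromorphic) function and invoking \eqref{Eq.2}, i.e.\ that common zeros of $P$ and $Q$ are negligible, to show $f-b$ cannot vanish too often at points that survive in $\psi$. A secondary subtlety is that the $\tau_j$ are genuine constants, so the completely ramified value count from the generalized second main theorem caps their number at four, which is exactly consistent with \eqref{1.3}; I would use this cap to rule out extraneous configurations and to pin down that after a M\"obius normalization the surviving data is precisely $(f')^{2}=a(z)\prod_{j}(f-\tau_j)^{e_j}$ with the exponent pattern of \eqref{1.2} or \eqref{1.3}. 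Finally, identifying the coefficient $a(z)=1/\psi$ as rational follows since $f$ solves \eqref{SWDE} with rational coefficients and $\psi$ is a small function with $N(r,\psi)+N(r,1/\psi)=S(r,f)$, forcing $\psi$ to be rational.
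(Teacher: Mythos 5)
Your construction is essentially the paper's: the same local pole-order comparison at the $\tau_j$-points, the same auxiliary quotients, and the same splitting $T(r,\psi)=m(r,\psi)+N(r,\psi)$ with \eqref{Eq.1} and \eqref{Eq.2} doing the bookkeeping. Two small sharpenings you should make explicit. First, the ramification step gives multiplicity \emph{exactly} $2$, not merely $\ge 2$: at a generic $\tau_j$-point of multiplicity $k$ the right side of \eqref{SWDE} has a pole of order $mk$, while $S(f,z)^{m}$ has a pole of order exactly $2m$ (when $f'$ vanishes) or none (when $k=1$), so $k=2$ and $f'$ has a simple zero there; this is what makes the cancellation in $\psi$ exact. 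Second, in case \eqref{Sec.2.Lem.4.Q.E.4} (note the paper's factor is $(f+b(z))^{2m}$, a harmless sign difference from your $(f-b(z))^{2}$) the same pole-order comparison at a generic zero of $f+b$ of order $l$ gives $2ml\le 2m$, hence $l=1$ with $f'$ vanishing there; so these points are \emph{analytic} points of $\psi_1$, and no separate estimate on the frequency of $b$-points is needed beyond \eqref{Eq.2} --- the worry you raise about uncontrolled poles from the repeated factor resolves itself locally.

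The one step that genuinely fails is your final rationality argument. A function $\psi$ with $T(r,\psi)=S(r,f)$ and $N(r,\psi)+N(r,1/\psi)=S(r,f)$ need not be rational: a zero-free, pole-free entire function such as $e^{z}$ satisfies all of these whenever $f$ grows fast enough, so ``forcing $\psi$ to be rational'' does not follow from smallness plus small zero/pole counting. The paper closes this differently: it invokes the fact that transcendental meromorphic solutions of \eqref{SWDE} with rational coefficients have finite order (\cite[Theorem~1(d)]{Liao&Ye2005}). For finite-order $f$ the logarithmic-derivative estimates hold with error $O(\log r)$ and no exceptional set, and since the coefficients are rational the exceptional points in \eqref{Eq.1}--\eqref{Eq.2} are finite in number; hence $N(r,\psi_j)=O(\log r)$ and $m(r,\psi_j)=O(\log r)$, so $T(r,\psi_j)=O(\log r)$ and $\psi_j$ is rational. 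Without this upgrade your argument only yields \eqref{1.2}--\eqref{1.3} with small meromorphic coefficients, which is strictly weaker than the stated conclusion in which the coefficient $a(z)$ is rational.
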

    
    \begin{proof}
        By \eqref{SWDE} and \eqref{Eq.2}, we conclude that if $Q(z,f)$ have a factor $(f-\tau)$, then
        almost all $\tau$-points of $f$ are of multiplicity $2$. Hence, $f'$ has a simple zero at these $\tau$-points. Define
            \begin{align*}
                \psi_{1}(z) &:= 	\frac{(f')^{2}}{(f+b(z))^{2}(f-\tau_{1})(f-\tau_{2})},  \qquad\quad \text{if $Q(z,f)$ is of the form \eqref{Sec.2.Lem.4.Q.E.4}},\\
                \psi_{2}(z) &:= 	\frac{(f')^{2}}{(f-\tau_{1})(f-\tau_{2})(f-\tau_{3})(f-\tau_{4})}, \quad \text{if $Q(z,f)$ is of the form \eqref{Sec.2.Lem.4.Q.E.6}}.
            \end{align*}
        Then almost all zeros of $Q(z,f)$ are analytic points of $\psi_{j}, j=1, 2,$ and almost all poles of $u$ are also analytic points of $\psi_{j}, j=1, 2$. Hence, we obtain $N(r,\psi_{j}) = O(\log r), j=1, 2.$ Similar to the proof of Lemma~\ref{Pf.M.Thm.Lem.1} and the order of $u$ is finite (see \cite[Theorem~1(d)]{Liao&Ye2005}), we get $m(r,\psi_{j}) = O(\log r), j=1, 2.$ Therefore, $T(r,\psi_{j})= O(\log r), j =1, 2.$ It implies that $f$ satisfies one of equations \eqref{1.2}-\eqref{1.3}.
    \end{proof}

    \begin{lemma}\label{Pf.M.Thm.Lem.3}
        Suppose the conditions of Theorem~1.1 hold, and $Q(z,f)$ is of the form 
        \eqref{Sec.2.Lem.4.Q.E.11}, \eqref{Sec.2.Lem.4.Q.E.12}, or \eqref{Sec.2.Lem.4.Q.E.13}. 
        Then by a suitable M$\ddot{o}$bius transformation $u = (af + b)/(cf + d), ad-bc \neq 0,$ $u$ satisfies one of equations \eqref{1.4}-\eqref{1.6}.
\end{lemma}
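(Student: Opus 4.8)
The plan is to follow the template established in Lemmas~\ref{Pf.M.Thm.Lem.1}--\ref{Pf.M.Thm.Lem.2}: for each admissible form of $Q(z,f)$ I would construct an auxiliary function built from $f'$ and the linear factors of $Q$ whose pole balance forces it to be rational. I first match each form to its target by comparing multiplicities: form \eqref{Sec.2.Lem.4.Q.E.11} should yield \eqref{1.6}, form \eqref{Sec.2.Lem.4.Q.E.12} should yield \eqref{1.4}, and form \eqref{Sec.2.Lem.4.Q.E.13} should yield \eqref{1.5}. Accordingly I set
\begin{align*}
\psi_{1} &:= \frac{(f')^{6}}{(f-\tau_{1})^{3}(f-\tau_{2})^{4}(f-\tau_{3})^{5}}, &&\text{for \eqref{Sec.2.Lem.4.Q.E.11}},\\
\psi_{2} &:= \frac{(f')^{3}}{(f-\tau_{1})^{2}(f-\tau_{2})^{2}(f-\tau_{3})^{2}}, &&\text{for \eqref{Sec.2.Lem.4.Q.E.12}},\\
\psi_{3} &:= \frac{(f')^{4}}{(f-\tau_{1})^{2}(f-\tau_{2})^{3}(f-\tau_{3})^{3}}, &&\text{for \eqref{Sec.2.Lem.4.Q.E.13}},
\end{align*}
and the goal is to prove $T(r,\psi_{k}) = O(\log r)$, so that each $\psi_{k}$ equals a rational function $a(z)$ and the corresponding equation \eqref{1.4}--\eqref{1.6} follows with $u=f$ (the only M\"obius transformation used being the normalization $\deg_{f}P=\deg_{f}Q$ made at the outset).

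The crux is the multiplicity analysis at the $\tau_{j}$-points. If $g=f-\tau$ has a zero of order $p\ge 2$ at a point, then $1/g$ has a pole of order $p$ there and $S(f,z)=S(g,z)=S(1/g,z)$ has a pole of order exactly $2$, with leading coefficient $(1-p^{2})/2\neq 0$; hence $S(f,z)^{m}$ has a pole of order exactly $2m$. On the other hand, at such a point a factor $(f-\tau)^{2m/n}$ of $Q$ contributes a pole of order $2mp/n$ to $R=P/Q$, since by \eqref{Eq.2} the numerator $P$ does not vanish there for all but $S(r,f)$ of these points. Equating orders through \eqref{SWDE} gives $2m=2mp/n$, forcing $p=n$. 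Reading off the exponents $2m/n$ in each form (so $n=2$ for the $(f-\tau)^{m}$ factors, and $n=3,4,6$ for the remaining factors) I conclude that almost all $\tau_{j}$-points of $f$ have multiplicity exactly $n_{j}$, whence $f'$ has a zero of order $n_{j}-1$ there.

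With these multiplicities in hand the pole balance of each $\psi_{k}$ is routine: the denominator exponents were chosen so that at a $\tau_{j}$-point of order $n_{j}$ the numerator's zero of order $6(n_{j}-1)$ (resp.\ $3(n_{j}-1)$, $4(n_{j}-1)$) cancels the denominator's zero, while by \eqref{Eq.1} almost all poles of $f$ are simple, where the numerator's pole of order $2d$ ($d=6,3,4$) cancels the denominator's pole of order $\sum_{j}e_{j}=2d$. Thus the only uncancelled poles of $\psi_{k}$ lie over the $O(\log r)$ exceptional points (multiple poles of $f$, $\tau_{j}$-points of the wrong order, and zeros and poles of the coefficients), giving $N(r,\psi_{k})=O(\log r)$. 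For the proximity term I proceed exactly as in Lemma~\ref{Pf.M.Thm.Lem.1}: the lemma of the logarithmic derivative applied to \eqref{SWDE} gives $m(r,R)=S(r,f)$, then \cite[Lemma~2.4 (i)]{ISZ1997} gives $m(r,1/Q)=S(r,f)$, and \cite[Lemma~2.4 (ii)]{ISZ1997} gives $m(r,\psi_{k})=S(r,f)$; invoking the finiteness of the order of $f$ \cite[Theorem~1(d)]{Liao&Ye2005} upgrades this to $O(\log r)$. Combining, $T(r,\psi_{k})=O(\log r)$, so $\psi_{k}$ is rational and $f$ satisfies the asserted equation.

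I expect the main obstacle to be the multiplicity step, namely verifying cleanly that every $\tau_{j}$-point has order exactly $n_{j}$ rather than a larger admissible value: this is where the precise order-$2$ pole of the Schwarzian at a multiple point, together with the coprimality input \eqref{Eq.2}, must be used sharply. The remaining bookkeeping — checking that the chosen denominator exponents cancel both at the $\tau_{j}$-points and at the simple poles of $f$ — is routine once the multiplicities are pinned down, and the proximity estimates are inherited essentially verbatim from the earlier lemmas.
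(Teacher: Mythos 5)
Your proposal is correct and follows essentially the same route as the paper: your $\psi_{1},\psi_{2},\psi_{3}$ coincide exactly with the paper's auxiliary functions $\xi_{1},\xi_{2},\xi_{3}$, matched to the forms \eqref{Sec.2.Lem.4.Q.E.11}, \eqref{Sec.2.Lem.4.Q.E.12}, \eqref{Sec.2.Lem.4.Q.E.13} in the same way, with $T(r,\xi_{j})=O(\log r)$ obtained via the same proximity estimates and finite-order input from \cite{Liao&Ye2005}. The multiplicity analysis you spell out (the exact double pole of $S(f,z)$ with leading coefficient $(1-p^{2})/2$ forcing $p=n$ at the $\tau_{j}$-points, plus the pole-balance bookkeeping) is precisely what the paper compresses into ``similar to the proofs of the Lemmas above,'' and your verification is sound.
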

	
    \begin{proof}
        Define
            \begin{equation*}
                \xi_{1}(z) := \frac{(f')^{6}}{(f-\tau_{1})^{3}(f-\tau_{2})^{4}(f-\tau_{3})^{5}}, \qquad \text{if $Q(z,f)$ is of the form \eqref{Sec.2.Lem.4.Q.E.11}},
            \end{equation*}
            \begin{equation*}
                \xi_{2}(z) := \frac{(f')^{3}}{(f-\tau_{1})^{2}(f-\tau_{2})^{2}(f-\tau_{3})^{2}}, \qquad \text{if $Q(z,f)$ is of the form \eqref{Sec.2.Lem.4.Q.E.12}},
            \end{equation*}
            \begin{equation*}
                \xi_{3}(z) := \frac{(f')^{4}}{(f-\tau_{1})^{2}(f-\tau_{2})^{3}(f-\tau_{3})^{3}}, \qquad \text{if $Q(z,f)$ is of the form \eqref{Sec.2.Lem.4.Q.E.13}}.
            \end{equation*}
       Similar to the proofs of the Lemmas above, we conclude that $T(r,\xi_{j}) = O(\log r),$ $j= 1, 2, 3.$ It implies that $f$ satisfies one of equations \eqref{1.4}-\eqref{1.6}.
    \end{proof}


    \begin{lemma}\label{Pf.M.Thm.lem.4}
        Suppose the conditions of Theorem~1.1 hold, and $Q(z,f)$ is of the form
        \eqref{Sec.2.Lem.4.Q.E.3}. Then by a suitable M$\ddot{o}$bius transformation $u = (af + b)/(cf + d), ad-bc \neq 0,$ $u$ satisfies a Riccati differential equation.
    \end{lemma}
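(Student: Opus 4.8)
The plan is to follow the template of Lemma~\ref{Pf.M.Thm.Lem.1}: identify the critical points of $f$ that the pole structure of $R(z,f)$ forces, attach to them an auxiliary function regular both there and at the poles of $f$, and prove it small. First I would note that, by \eqref{SWDE} and \eqref{Eq.2}, almost every zero of $Q(z,f)=c(z)(f+b(z))^{2m}$ is a $(-b)$-point at which $R(z,f)$ has a pole; since $S(f,z)^m=R(z,f)$ and $f$ is finite there, $S(f,z)$ must have a pole, which at a finite point means $f'=0$. So almost all $(-b)$-points are critical points of $f$. As $b(z)$ is nonconstant, $b'\not\equiv0$, so at almost all such points the linear term $b'(z_1)(z-z_1)$ dominates in $f+b$, giving $f+b$ a simple zero and $Q(z,f)$ a zero of multiplicity exactly $2m$, just as in the earlier lemmas. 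I also record that $T(r,S(f,z))=2T(r,f)+S(r,f)$ and $m(r,S(f,z))=S(r,f)$, so the double poles of $S(f,z)$ at these critical points give $\overline N(r,-b;f)=T(r,f)+S(r,f)$.

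Next I would set $\varphi:=f'/(f+b)^2$ and aim for $T(r,\varphi)=S(r,f)$, which yields the Riccati equation $f'=\varphi(f+b)^2=\varphi f^2+2\varphi b\,f+\varphi b^2$. The proximity estimate is identical to Lemma~\ref{Pf.M.Thm.Lem.1}: the logarithmic derivative lemma applied to \eqref{SWDE} gives $m(r,R)=S(r,f)$, then $m(r,1/Q)=S(r,f)$ by \cite[Lemma~2.4 (i)]{ISZ1997} and $m(r,\varphi)=S(r,f)$ by \cite[Lemma~2.4 (ii)]{ISZ1997}. The square in the denominator is chosen so that $\varphi$ is regular at the poles of $f$, which by \eqref{Eq.1} are almost all simple: there $f'$ and $(f+b)^2$ both have a double pole, so $\varphi$ stays finite. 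What remains is the regularity of $\varphi$ at the $(-b)$-points, and this is exactly where the single repeated factor of $Q$ makes the argument depart from Lemma~\ref{Pf.M.Thm.Lem.1}.

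This last regularity is the hard part. At a $(-b)$-point the denominator $(f+b)^2$ has a double zero while $f'$ vanishes only to order $p-1$, where $p\ge2$ is the local multiplicity of $f$ over the fixed value $-b(z_1)$, so $\varphi$ is regular there exactly when $p\ge3$ (by contrast, the two distinct factors in Lemma~\ref{Pf.M.Thm.Lem.1} make the denominator vanish only simply, so any $p\ge2$ works). To pin down $p$ I would match leading Laurent coefficients in \eqref{SWDE}: from $S(f,z)\sim-\tfrac{p^2-1}{2}(z-z_1)^{-2}$ and $R(z,f)\sim\tfrac{P(z_1,-b(z_1))}{c(z_1)(b'(z_1))^{2m}}(z-z_1)^{-2m}$ one gets $(p^2-1)^m=G(z_1)$, where $G:=(-2)^mP(z,-b)\big/\big(c\,(b')^{2m}\big)$ is a small function. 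Since the $(-b)$-points number $\overline N(r,-b;f)=T(r,f)+S(r,f)$ while $G$ assumes any fixed value only $S(r,f)$ times, and since $N(r,1/f')\le T(r,f')+O(1)\le2T(r,f)+S(r,f)$ bounds the total ramification, a counting argument forces $G$ to be constant and $p\equiv p_0\in\{2,3\}$ at almost all $(-b)$-points. For $p_0=3$ the function $\varphi$ is regular at the $(-b)$-points, hence small, and the Riccati equation follows. The borderline $p_0=2$ I expect to be the main obstacle: then $\varphi$ has simple poles at the $(-b)$-points, and one must instead use $\psi:=f'/(f+b)$, whose only poles are the simple poles of $f$; the delicate point is to show $f$ then carries only $S(r,f)$ poles, so that $\psi$ is small and $f'=\psi(f+b)$ is a (degenerate, linear) Riccati. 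I would handle this by excluding a second critical value and using that $f$ is unramified over $\infty$.
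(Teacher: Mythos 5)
Your generic case is sound and close in spirit to the paper's argument: the proximity estimates via \cite[Lemma~2.4]{ISZ1997} are exactly the paper's, and your leading-coefficient matching $(p^2-1)^m=G(z_1)$ is precisely the paper's relation \eqref{Pf.1.Lem.3.4_E.2} (with $c_1=b'(z_0)$). Two remarks there. First, your counting argument for ``$G$ constant and $p\equiv p_0\in\{2,3\}$'' has a small hole as stated: the bound $N(r,1/f')\le 2T(r,f)+S(r,f)$ does not by itself exclude, say, a mix of $(-b)$-points with $p=2$ and $p=4$ (contributing on average $2$ per point); it is patchable --- at least $\tfrac12 T(r,f)$ worth of $(-b)$-points must carry $p\in\{2,3\}$, pigeonhole then forces the rational function $G$ to be constant, and then $(p^2-1)^m=(p_0^2-1)^m$ pins $p$ almost everywhere --- but note the paper avoids the issue entirely: since $c_j=b_j$ for $1\le j\le k-1$ gives $c_2=b_2$ whenever $k>2$, its auxiliary function $h_1$ in \eqref{E_h_{1}} is regular at the $(-b)$-points \emph{for every} $k>2$ simultaneously, so no global pinning of $p$ is needed, and the case split is on whether the rational identity $(b')^{-2m}\equiv(-3/2)^m/P_0(z,-b)$ holds (which is exactly ``$k=2$ at almost all points'').

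The genuine gap is your $p_0=2$ case. You propose $\psi:=f'/(f+b)$, plan to prove $N(r,f)=S(r,f)$, and conclude the linear equation $f'=\psi(f+b)$. That cannot work: nothing in \eqref{SWDE} makes poles of $f$ rare --- simple poles are invisible to the Schwarzian, and indeed in this degenerate case the correct conclusion (the paper's) is a full Riccati equation $u'=\delta(u+b)+\Phi(u+b)^2$ with small $\delta,\Phi$, whose solutions generically satisfy $N(r,u)=T(r,u)+S(r,u)$; so the statement you hope to prove is false in general, and ``excluding a second critical value'' plus unramifiedness over $\infty$ will not rescue it. What you are missing is the information carried by the \emph{next} Laurent coefficient: matching the $(z-z_0)^{-2m+1}$ terms on both sides of \eqref{Pf.1.Lem.3.4_E.1} gives the paper's relation \eqref{Pf.1.Lem.3.4_E.3}, and combining it with the small function $h_2$ of \eqref{Pf.M.Thm.Lem.4.E.4} (whose residues at $(-b)$-points and at simple poles of $u$ cancel, so $T(r,h_2)=S(r,u)$) shows that the second Taylor coefficient $c_2$ of $u+b$ at each $(-b)$-point is a fixed small function evaluated there, $c_2=\gamma_2(z_0)$. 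With that, $\Phi:=\bigl(u'-\delta(u+b)\bigr)/(u+b)^2$, $\delta:=(2\gamma_2-b'')/b'$, has its numerator vanishing to order $2$ at the $(-b)$-points, is regular at the simple poles of $u$, hence is small, yielding the Riccati equation. Since you only match leading coefficients, your proposal has no access to $c_2$, and the $p_0=2$ branch as written fails.
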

    
    \begin{proof}
        By a suitable M$\mathrm{\ddot{o}}$bius transformation $u = (af + b)/(cf + d), ad-bc \neq 0,$ we have
            \begin{equation}\label{Pf.1.Lem.3.4_E.1}
                S(u,z)^{m} = \frac{P_{0}(z,u)}{(u+b(z))^{2m}},
            \end{equation}
        where $P_{0}(z,u) = a_{0}u^{2m} + a_{1}u^{2m-1} + \cdots + a_{2m-1}u + a_{2m}$ and $a_{0}, \ldots, a_{2m}$ are rational functions. It follows from \eqref{Pf.1.Lem.3.4_E.1} that almost all zeros of $u+b$ are zeros of $u'$. Hence, we have almost all zeros of $u+b$ are simple. Let $z_{0}$ be a zero of $u+b$, and let
                $$u(z)+b(z) = c_{1}(z-z_{0}) + c_{2}(z-z_{0})^{2} + \cdots,$$
      	and
                $$b(z) = b_{0} + b_{1}(z-z_{0}) + \cdots$$
        be their Taylor expansions at $z=z_{0}.$ 
        Thus, we have 
            \begin{align*}
               u(z) &= -b_{0} + (c_{k} - b_{k})(z-z_{0})^{k} + (c_{k+1} - b_{k+1})(z-z_{0})^{k+1} \cdots,\\
               u'(z) &= k(c_{k} - b_{k})(z-z_{0})^{k-1} + (k+1)(c_{k+1} - b_{k+1})(z-z_{0})^{k} + \cdots,
            \end{align*}
        where $k-1$ $(k \ge 2)$ is the multiplicity of the zero of $u'$ at $z_{0}.$
        Therefore, we obtain $c_{j} = b_{j}$ $(1\le j \le k-1)$, and the Laurent expansion
            \begin{equation*}
                S(u,z)^{m} = \left(\frac{1-k^{{2}}}{2}\right)^{m}(z-z_{0})^{-2m} + \left[\frac{2m}{k}\left(\frac{1-k^{{2}}}{2}\right)^{m}\frac{(c_{k+1}-b_{k+1})}{(c_{k}-b_{k})}
                \right](z-z_{0})^{-2m+1} + \cdots.
            \end{equation*}
        On the other hand, from the right side of \eqref{Pf.1.Lem.3.4_E.1}, we see that 
            \begin{align*}
                &S(u,z)^{m} = \frac{P_{0}(z,u)}{(u+b)^{2m}}\\\\
                &=\frac{P_{0}(z_{0},-b(z_{0}))}{c_{1}^{2m}}(z-z_{0})^{-2m} + \frac{P_{1}(z_{0},-b(z_{0}))b_{1} -2mP_{0}(z_{0},-b(z_{0}))c_{2}}{c_{1}^{2m+1}}(z-z_{0})^{-2m+1} + \cdots,
            \end{align*}
        where $P_{1}(z,-b) = a_{0}'(-b)^{2m} + a_{1}'(-b)^{2m-1} + \cdots + a_{2m-1}'(-b) + a_{2m}'.$
        Hence, we obtain
            \begin{align}
               \left(\frac{1-k^{{2}}}{2}\right)^{m} &= \frac{P_{0}(z_{0},-b(z_{0}))}{c_{1}^{2m}},\label{Pf.1.Lem.3.4_E.2} \\
               \frac{2m}{k}\left(\frac{1-k^{{2}}}{2}\right)^{m}\frac{(c_{k+1}-b_{k+1})}{(c_{k}-b_{k})} &= \frac{P_{1}(z_{0},-b(z_{0}))b_{1} -2mP_{0}(z_{0},-b(z_{0}))c_{2}}{c_{1}^{2m+1}}.\label{Pf.1.Lem.3.4_E.3}
            \end{align}
        Define now 
   			\begin{align}
   				&h_{1}(z) := \left(\frac{1}{(b')^{m}(u+b)} + \gamma_{1} \right)^{2}  + \left( \frac{1}{(b')^{2m+1}(u+b)}\right)' + \frac{(-3/2)^{m}}{b'P_{0}(z,-b)}\frac{u'}{(u+b)^{2}}\label{E_h_{1}}\\
   				&= \left(\frac{1}{(b')^{m}(u+b)} + \gamma_{1} \right)^{2} - \left(\frac{(2m+1)b''}{(b')^{2m+2}(u+b)} + \frac{u'+b'}{(b')^{2m+1}(u+b)^{2}}	\right) + \frac{(-3/2)^{m}}{b'P_{0}(z,-b)}\frac{u'}{(u+b)^{2}},\notag
   			\end{align}
        where $\gamma_{1}(z):=mb''/(b')^{m+2} + b''/2(b')^{m+2}$. 
        If 
        $$\frac{1}{(b')^{2m}} - \frac{(-3/2)^{m}}{P_{0}(z,-b)} \not\equiv 0,$$
        then almost all poles of $h_{1}$ must be zeros of $u+b$. Hence, almost all poles of $h_{1}$ are of multiplicity $2$. But by \eqref{Pf.1.Lem.3.4_E.2} and \eqref{E_h_{1}}, 
        if $k=2$, then we have
        	\begin{align*} 
        		h_{1} = &\left(\frac{1}{b_{1}^{m+1}}(z-z_{0})^{-1} + \frac{b_{2}-c_{2}}{b_{1}^{m+2}} + o(1) \right)^{2} + \left(\frac{1}{b_{1}^{2m+2}}(z-z_{0})^{-1} + O(1)\right)' \\
        		&+  \frac{2(c_{2}-b_{2})}{b_{1}^{2m+3}}(z-z_{0})^{-1} + O(1) = O(1).
        	\end{align*}
        If $k > 2,$ then $z_{0}$ is an analytic point of $u'/(u+b)^{2}$ and $c_{2} = b_{2}$. It implies that 
        	\begin{align*}
        			h_{1} = &\left(\frac{1}{b_{1}^{m+1}}(z-z_{0})^{-1} + o(1) \right)^{2} + \left(\frac{1}{b_{1}^{2m+2}}(z-z_{0})^{-1} + O(1)\right)' + O(1)= O(1).
        	\end{align*}
        Hence, almost all zeros of $u+b$ are analytic points of $h_{1}$.
        Thus, we obtain $N(r,h_{1}) = S(r,u).$ Similar to the proof of Lemma~\ref{Pf.M.Thm.Lem.1}, we get $m(r,h_{1}) = S(r,u).$
		It implies that $u$ satisfies a Riccati differential equation.
        If 
        $$\frac{1}{(b')^{2m}} - \frac{(-3/2)^{m}}{P_{0}(z,-b)} \equiv 0,$$
        then it follows from \eqref{Pf.1.Lem.3.4_E.2} that almost all zeros of $u'$ are simple.
        Define
            \begin{equation}\label{Pf.M.Thm.Lem.4.E.4}
                h_{2}(z) := \frac{u''}{u'} -2\frac{u'+b'}{u+b} + \frac{b'}{u+b}.
            \end{equation}
        Obviously, we have $T(r,h_{2}) = S(r,u).$ From \eqref{Pf.1.Lem.3.4_E.3} and \eqref{Pf.M.Thm.Lem.4.E.4}, we conclude that 
            \begin{equation*}
                c_{2} = \gamma_{2}(z_{0}),
            \end{equation*}
        where $\gamma_{2}$ is a small function with respect to $u$. It gives us that $u$ satisfies a Riccati differential equation. This completes the proof.
    \end{proof}

    \begin{lemma}\label{Pf.M.Thm.lem.5}
        Suppose the conditions of Theorem~1.1 hold, and $Q(z,f)$ is of the form 
        \eqref{Sec.2.Lem.4.Q.E.5}. Then \eqref{SWDE} admits no transcendental meromorphic solutions.
    \end{lemma}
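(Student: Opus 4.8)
The plan is to argue by contradiction: assuming \eqref{SWDE} with $Q$ of the form \eqref{Sec.2.Lem.4.Q.E.5} has a transcendental meromorphic solution $f$, I will show that the factor $(f+b)^{2m}$ forces $f$, after a M\"obius transformation, to satisfy a Riccati equation, whereas the factor $(f-\tau_{1})^{2m/n}$ with $n\ge 2$ forces $f$ to take the value $\tau_{1}$ with multiplicity $n$; the two requirements cannot coexist. I use throughout that $S(f,z)$ is M\"obius-invariant, that all its poles are double and occur precisely at the multiple value-points (critical points of finite value) and the multiple poles of $f$, and that at a value-point of multiplicity $q\ge 2$ one has $S(f,z)=\tfrac{1-q^{2}}{2}(z-z_{0})^{-2}+\cdots$, while $S(f,z)$ is analytic at simple value-points and at simple poles.

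First I read off the local matchings forced by \eqref{Eq.1} and \eqref{Eq.2}. As $N(r,0;P)_{Q}=S(r,f)$, almost every zero of $Q$ is a pole of $R$, hence of $S(f,z)^{m}$. At a $\tau_{1}$-point of multiplicity $q$ the factor $(f-\tau_{1})^{2m/n}$ gives $R$ a pole of order $q\cdot 2m/n$, while $S(f,z)^{m}$ has a pole of order $2m$ when $q\ge 2$ and is analytic when $q=1$; comparing orders forces $q=n$ for almost all $\tau_{1}$-points and $N_{1)}(r,\tau_{1},f)=S(r,f)$, so that $\overline N(r,\tau_{1},f)=\tfrac1n N(r,\tau_{1},f)+S(r,f)$. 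At a zero of $f+b$ the factor $(f+b)^{2m}$ yields a pole of $R$ of order $2m$ exactly when $f+b$ vanishes simply, and then $S(f,z)^{m}$ must have a pole of order $2m$, i.e. the point is a critical point of $f$ of value-multiplicity $\ge 2$; hence almost all zeros of $f+b$ are simple. Since every critical point of finite value is a zero of $Q$, i.e. a $\tau_{1}$-point or a zero of $f+b$, and the multiple poles of $f$ are negligible by \eqref{Eq.1}, the double poles of $S(f,z)$ come exactly from these two families: $N(r,S(f,z))=2\overline N(r,-b,f)+2\overline N(r,\tau_{1},f)+S(r,f)$.

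Next I convert this into a global identity. The lemma on the logarithmic derivative gives $m(r,S(f,z))=S(r,f)$, so $T(r,S(f,z))=N(r,S(f,z))+S(r,f)$; since $R=S(f,z)^{m}$ we have $T(r,R)=m\,T(r,S(f,z))$, and $\deg_{f}P=\deg_{f}Q=2m+2m/n$ gives $T(r,R)=(2m+\tfrac{2m}{n})T(r,f)+S(r,f)$. Combining, $\overline N(r,-b,f)+\overline N(r,\tau_{1},f)=\tfrac{n+1}{n}T(r,f)+S(r,f)$. But the first main theorem bounds $\overline N(r,-b,f)\le T(r,f)+S(r,f)$ and $\overline N(r,\tau_{1},f)=\tfrac1n N(r,\tau_{1},f)\le\tfrac1n T(r,f)+S(r,f)$, whose sum is exactly $\tfrac{n+1}{n}T(r,f)$; hence equality holds throughout, forcing $N(r,\tau_{1},f)=T(r,f)+S(r,f)$ and $\overline N(r,-b,f)=T(r,f)+S(r,f)$. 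In particular $\overline N(r,\tau_{1},f)=\tfrac1n T(r,f)+S(r,f)\neq S(r,f)$, so $f$ has $\tau_{1}$-points of multiplicity $n\ge 2$ of positive counting, and it has enough zeros of $f+b$ to run the argument of Lemma~\ref{Pf.M.Thm.lem.4}.

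Finally I apply that argument. After the M\"obius transformation of Lemma~\ref{Pf.M.Thm.lem.4} the equation reads $S(u,z)^{m}=P_{0}(z,u)/\bigl[(u+b)^{2m}(u-\tau_{1})^{2m/n}\bigr]$, and since $2m/n$ is a positive integer and $-b\neq\tau_{1}$ generically, the extra factor $(u-\tau_{1})^{2m/n}$ is meromorphic and nonvanishing at the zeros of $u+b$; thus the local Laurent analysis there is identical to that of Lemma~\ref{Pf.M.Thm.lem.4}, with $P_{0}(z,-b)$ replaced by the small meromorphic function $P_{0}(z,-b)/(-b-\tau_{1})^{2m/n}$. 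Forming the same auxiliary functions $h_{1}$ (and $h_{2}$ in the degenerate subcase) gives $T(r,h_{j})=S(r,u)$, whence $u$ satisfies a Riccati equation $u'=\alpha_{0}+\alpha_{1}u+\alpha_{2}u^{2}$ with small coefficients. A transcendental solution of such an equation, however, takes every finite value either simply or not at all and has only simple poles: at a $c$-point $u'=\alpha_{0}+\alpha_{1}c+\alpha_{2}c^{2}$, so if this is $\not\equiv 0$ the multiple $c$-points are $S(r,u)$ in number, while if it vanishes identically then $u'=(u-c)(\alpha_{1}+\alpha_{2}(u+c))$ makes $(u-c)'/(u-c)$ analytic at every $c$-point, forcing $c$ to be omitted; the poles are simple by the analogous balancing at $u=\infty$. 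Since $T$, $N$ and value-multiplicities are preserved up to $S(r,\cdot)$ by the transformation, $u$ inherits from $f$ a value (namely $\tau_{1}$) taken with multiplicity $n\ge 2$ and counting function $\tfrac1n T(r,u)+S(r,u)$, contradicting the dichotomy just proved. Hence \eqref{SWDE} with $Q$ of the form \eqref{Sec.2.Lem.4.Q.E.5} has no transcendental meromorphic solution. I expect the main obstacle to be this last step: verifying that the auxiliary-function construction of Lemma~\ref{Pf.M.Thm.lem.4} genuinely survives the presence of $(u-\tau_{1})^{2m/n}$ and still closes up to a bona fide Riccati equation.
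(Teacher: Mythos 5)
Your proposal is correct and takes essentially the same route as the paper: the paper likewise reduces to the Riccati machinery of Lemma~\ref{Pf.M.Thm.lem.4} and then derives a contradiction from the multiplicity-$n$ points forced by the factor $(f-\tau_{1})^{2m/n}$, except that it first applies $u=1/(f-\tau_{1})$, so the denominator becomes the pure power $(u+b)^{2m}$ with $\deg P_{0}=2m+2m/n$ and the multiplicity-$n$ $\tau_{1}$-points become multiplicity-$n$ poles, citing \cite[Lemma~1]{ISZ1991} for $N(r,u)=T(r,u)+S(r,u)$ where you instead derive the needed counting fact by hand. Your variant with $\tau_{1}$ kept finite does go through, since $(u-\tau_{1})^{2m/n}$ is analytic and nonvanishing at zeros of $u+b$ and, because $c_{j}=b_{j}$ for $1\le j\le k-1$, it perturbs the relevant Laurent coefficients only by small functions.
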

    
    \begin{proof}
    	By the M$\mathrm{\ddot{o}}$bius transformation $u = 1/(f-\tau_{1})$, we have 
 			\begin{equation}
 				  S(u,z)^{m} = \frac{P_{0}(z,u)}{(u+b(z))^{2m}},
 			\end{equation}
    	where $\deg P_{0} = 2m + 2m/n.$  We show that $u$ satisfies a Riccati differential equation, the method is similar to the proof of Lemma~\ref{Pf.M.Thm.lem.4}, we omit details here. But then by \cite[Lemma~1]{ISZ1991}, we have $N(r,u)=T(r,u)+S(r,f)$, and almost all poles of $u$ are of multiplicity $n \ge 2$, which is a contradiction. This completes the proof.
    \end{proof}


    \begin{lemma}\label{Pf.M.Thm.lem.6}
        Suppose the conditions of Theorem~1.1 hold, and $Q(z,f)$ is of the form 
        \eqref{Sec.2.Lem.4.Q.E.7}, \eqref{Sec.2.Lem.4.Q.E.8}, \eqref{Sec.2.Lem.4.Q.E.9}, or \eqref{Sec.2.Lem.4.Q.E.10}. Then by a suitable M$\ddot{o}$bius transformation $u = (af + b)/(cf + d), ad-bc \neq 0,$ $u$ satisfies a first order differential equation \eqref{Thm.1.E.6}.
    \end{lemma}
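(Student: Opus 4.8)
The plan is to reproduce the mechanism of Lemmas~\ref{Pf.M.Thm.Lem.1}--\ref{Pf.M.Thm.lem.4}: exhibit a candidate first integral of \eqref{SWDE}, quadratic in $u'$, show by a value-distribution argument that its ``coefficients'' are small functions, and read off \eqref{Thm.1.E.6}. First I would fix a M\"obius transformation normalizing the three special values; since we may assume $\deg_f P = \deg_f Q$, the value $f=\infty$ is ordinary, so by \eqref{Eq.1} almost all poles of $u$ are simple. Then, exactly as in Lemmas~\ref{Pf.M.Thm.Lem.2}--\ref{Pf.M.Thm.Lem.3}, combining \eqref{SWDE} with \eqref{Eq.2} shows that almost all $\tau_j$-points of $u$ have the multiplicity $p_j = 2m/e_j$ forced by the exponent $e_j$ of $(f-\tau_j)$ in $Q$; this yields $(p_1,p_2,p_3)=(2,2,n),(2,3,3),(2,3,4),(2,3,5)$ for \eqref{Sec.2.Lem.4.Q.E.7}--\eqref{Sec.2.Lem.4.Q.E.10}.

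Next, at a generic $\tau_j$-point $z_0$ I would expand $u$ and the coefficients in Taylor series and match the two leading Laurent coefficients of $S(u,z)^m = \left(\tfrac{1-p_j^2}{2}\right)^m (z-z_0)^{-2m} + \cdots$ against those of $P_0(z,u)/Q_0(z,u)$, obtaining local relations between the expansion coefficients of $u$ and small functions, as in \eqref{Pf.1.Lem.3.4_E.2}--\eqref{Pf.1.Lem.3.4_E.3}. Guided by these, I would assemble an auxiliary function $\Phi$, of degree two in $u'$ and built from $u'$, $u''$ and the factors $u-\tau_j$, so that every generic $\tau_j$-point and every simple pole of $u$ is an analytic point of $\Phi$; this gives $N(r,\Phi)=S(r,u)$. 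The lemma on the logarithmic derivative, the bound $m(r,P_0/Q_0)=S(r,u)$, and the finiteness of the order of $u$ (\cite[Theorem~1(d)]{Liao&Ye2005}) then yield $m(r,\Phi)=S(r,u)$, hence $T(r,\Phi)=S(r,u)$. Clearing denominators in the resulting identity produces $(u')^2 + B(z,u)u' + A(z,u)=0$ with rational coefficients, which is \eqref{Thm.1.E.6}.

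The hard part will be handling the three special points together, precisely because their multiplicities $2$ and $p_2,p_3\in\{3,4,5,n\}$ differ and are large: in contrast to the Euclidean configurations $(2,3,6),(3,3,3),(2,4,4)$ of Lemma~\ref{Pf.M.Thm.Lem.3}, here the local exponents satisfy $\sum_j(1-1/p_j)<2$, so no single power $(u')^k$ simultaneously clears all the exponents $1-1/p_j$ and stays consistent at the simple poles. The delicate step is to guarantee that the higher-ramification points $\tau_2,\tau_3$ leave no residual pole in $\Phi$; I expect this to force $\Phi$ to contain an explicit rational-in-$u$ correction matching the principal part of $S(u,z)^m$ at those points, in the spirit of the auxiliary function used in the proof of Lemma~\ref{Pf.M.Thm.lem.4}, rather than a bare quotient of the type $\psi_j$ or $\xi_j$. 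Checking that this correction can be kept quadratic in $u'$ and introduces no poles away from the special set is the most technical point, and it is where the four sub-cases \eqref{Sec.2.Lem.4.Q.E.7}--\eqref{Sec.2.Lem.4.Q.E.10} must be verified one by one.
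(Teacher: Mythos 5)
Your local analysis (simple poles via \eqref{Eq.1}, the forced multiplicities $(p_1,p_2,p_3)=(2,2,n),(2,3,3),(2,3,4),(2,3,5)$ at the $\tau_j$-points) is correct, but the strategy built on it --- constructing a quadratic-in-$u'$ first integral $\Phi$ with $T(r,\Phi)=S(r,u)$ --- is not how this lemma goes, and it would fail. For the forms \eqref{Sec.2.Lem.4.Q.E.8}--\eqref{Sec.2.Lem.4.Q.E.10} the paper proves there are \emph{no} transcendental meromorphic solutions at all, so the lemma holds vacuously there; no first integral is (or could be) exhibited. The actual mechanism is a counting argument your proposal omits entirely: write $P_1(z,u)=c(z)(u-\alpha_1)^{k_1}\cdots(u-\alpha_\mu)^{k_\mu}$ with small algebraic $\alpha_i$. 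By Yamanoi's second main theorem \eqref{SFThm} no $\alpha_i$ can be Picard exceptional, and none can be completely ramified either, since the $\tau_j$ already eat up ramification: $2T(r,u)\le \tfrac12 N(r,\tau_1,u)+\tfrac13 N(r,\tau_2,u)+\tfrac13 N(r,\tau_3,u)+\tfrac12 N(r,\alpha_1,u)+S(r,u)$ is a contradiction. Hence each $\alpha_i$ has many simple points; at a generic such point the left side $S(u,z)^m$ vanishes to an order divisible by $m$, so $k_i=\beta_i m$, and $\deg P_1=\deg Q_1$ forces $\sum_i\beta_i m\in\{7m/3,\;13m/6,\;31m/15\}$, which is impossible. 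Your own observation that $\sum_j(1-1/p_j)<2$ is the right warning sign, but the correct conclusion to draw from it is nonexistence (these are the spherical configurations, whose uniformizers are algebraic), not the need for a cleverer correction term in $\Phi$.

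For the remaining form \eqref{Sec.2.Lem.4.Q.E.7} the same counting (split according to whether one $\alpha_i$ is completely ramified or none is) forces $n=2$ together with $k_1\in\{m,2m,3m\}$, respectively $k_i=\beta_i m$, so the equation collapses --- after cancelling the common $m$-th-power structure --- to $S(u,z)=c(z)(u-\alpha_1)(u-\alpha_2)(u-\alpha_3)/\bigl((u-\tau_1)(u-\tau_2)(u-\tau_3)\bigr)$, i.e.\ \eqref{Pf.1.Lem.3.6_E.1}; the step from there to \eqref{Thm.1.E.6} is then \emph{cited}, namely Ishizaki's Theorem~1.1 in \cite{ISZ1997}, not reproved. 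Your plan to assemble $\Phi$ directly amounts to an attempt to redo that theorem from scratch, and as written it only controls poles of $u$ and the $\tau_j$-points; it never addresses the zeros of the numerator $P_1(z,u)$ (the $\alpha_i$-points), which is exactly where the difficulty sits and where the paper's argument operates exclusively through \eqref{SFThm}. So the proposal stalls in all four sub-cases: in three of them the object you want to build does not exist, and in the fourth the missing ingredient is the degree/divisibility reduction plus the appeal to \cite{ISZ1997}.
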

    
    \begin{proof}
        By a suitable M$\mathrm{\ddot{o}}$bius transformation $u = (af + b)/(cf + d), ad-bc \neq 0,$ we have
        	\begin{equation}\label{Lem6_E.1}
        		S(u,z) = \frac{P_{1}(z,u)}{Q_{1}(z,u)},
        	\end{equation}
        where $\deg P_{1} = \deg Q_{1}$, and $ Q_{1}(z,u)$ is of the form \eqref{Sec.2.Lem.4.Q.E.7}, \eqref{Sec.2.Lem.4.Q.E.8}, \eqref{Sec.2.Lem.4.Q.E.9}, or \eqref{Sec.2.Lem.4.Q.E.10}.
        In what follows we write
        $$P_{1}(z,u) = c(z)(u-\alpha_{1})^{k_{1}}(u-\alpha_{2})^{k_{2}}\cdots(u-\alpha_{\mu})^{k_{\mu}},$$
        where $c(z)$ is a rational function, $\alpha_{1},\ldots,\alpha_{\mu}$ are in general algebraic functions and $k_{i} \in \mathbb{N}^{+}$, $i=1,\ldots,\mu.$
        Clearly, $\alpha_{i}$ cannot be Picard exceptional small functions by \eqref{SFThm}.
        We now assume that $\alpha_{i}$ $(i=1,2,\ldots,s)$ are completely ramified small functions. By \eqref{SFThm}, we easily know that $s\le 1,$ and $s \le \mu.$ We discuss it in two cases.
        
        \bigskip
        \noindent
        Case 1: $Q_{1}(z,u)$ is of the form \eqref{Sec.2.Lem.4.Q.E.8}, \eqref{Sec.2.Lem.4.Q.E.9}, or \eqref{Sec.2.Lem.4.Q.E.10}. If $s=1$, then we have
            \begin{align*}
                2T(r,u)&\le \sum_{j=1}^{3}\overline{N}(r,\tau_{j},u) + 	\overline{N}(r,\alpha_{1}, u) +S(r,u)\\
                &\le 1/2N(r,\tau_{1},u) + \sum_{j=2}^{3}1/3N(r,\tau_{j},u) + 	1/2N(r,\alpha_{1},u) + S(r,u)\\
                &< 2T(r,u) + S(r,u),
            \end{align*}
        which is a contradiction. Hence, we get $s=0$. Thus, we also obtain $N_{(1)}(r,\alpha_{i},u) \neq S(r,u)$ and $k_{i}= \beta_{i}m$ $(\beta_{i}\in \mathbb{Z^{+}}),$ $i=1,2,\ldots,\mu$. Since $\deg P_{1}=\deg Q_{1}$, we have 
            \begin{align*}
                \sum_{i=1}^{\mu}\beta_{i}m &= 7m/3, &\text{if $Q(z,u)$ is of the 	form \eqref{Sec.2.Lem.4.Q.E.8}},\\
                \sum_{i=1}^{\mu}\beta_{i}m &= 13m/6, &\text{if $Q(z,u)$ is of the 	form \eqref{Sec.2.Lem.4.Q.E.9}},\\	
                \sum_{i=1}^{\mu}\beta_{i}m &= 31m/15, &\text{if $Q(z,u)$ is of 	the form \eqref{Sec.2.Lem.4.Q.E.10}},
            \end{align*}
        which are impossible. Therefore, \eqref{Lem6_E.1} admits no transcendental meromorphic solutions in this case.
        
        \bigskip
        \noindent
        Case 2: $Q_{1}(z,u)$ is of the form \eqref{Sec.2.Lem.4.Q.E.7}.  If $s=1$, then by the same arguments as in the above case, we obtain $n=2$ and $N_{(2)}(r,\alpha_{1},u) \neq S(r,u).$ Thus, we have $k_{1}=\beta_{1}m/2$, and $k_{i}=\beta_{i}m$, $i=2,3,\ldots,\mu.$
        Since $\deg P_{1}=\deg Q_{1}$, we have 
            $$
            \sum_{i=1}^{\mu}k_{i}= \beta_{1}m/2 + \sum_{i=2}^{\mu}\beta_{i}m =3m.
            $$ 
        Therefore, we get $k_{1}=cm$ $(1\le c \le 3)$. This implies that \eqref{SWDE} reduces into 
            \begin{equation}\label{Pf.1.Lem.3.6_E.1}
                    S(u,z)=c(z)\frac{(u-\alpha_{1})(u-\alpha_{2})(u-\alpha_{3})}{(u-\tau_{1})(u-\tau_{2})(u-\tau_{3})},
            \end{equation}
        where $\tau_{1}, \tau_{2}, \tau_{3}$ are distinct constants, $c(z)$ is a rational function, and $\alpha_{1}, \alpha_{2}, \alpha_{3}$  are algebraic functions, not necessarily distinct. If $s=0,$ then we obtain $N_{(1)}(r,\alpha_{i},u) \neq S(r,u),$ $i=1,\ldots,\mu$. Hence, 
            $$
            \sum_{i=1}^{\mu}\beta_{i}m = 2m + 	2m/n.
            $$ 
        It leads to $n=2$. Therefore, \eqref{Lem6_E.1} reduces into \eqref{Pf.1.Lem.3.6_E.1}. By \cite[Theorem~1.1]{ISZ1997}, \eqref{Pf.1.Lem.3.6_E.1} reduces into a first-order algebraic differential equation \eqref{Thm.1.E.6}. It completes the proof.
    \end{proof}

       
    \begin{lemma}\label{Pf.M.Thm.lem.7}
        Suppose the conditions of Theorem~1.1 hold, and $Q(z,f)$ is of the form 
        \eqref{Sec.2.Lem.4.Q.E.15}. Then by a suitable M$\ddot{o}$bius transformation $u = (af + b)/(cf + d), ad-bc \neq 0,$ $u$ satisfies a Riccati differential equation, a first order differential equation \eqref{Thm.1.E.6}, or one of types \eqref{Thm.1.E.7}-\eqref{Thm.1.E.10}
    \end{lemma}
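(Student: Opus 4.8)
The plan is to follow the template of Lemmas~\ref{Pf.M.Thm.Lem.1}--\ref{Pf.M.Thm.lem.6}: first pin down the local structure of $u$ from the shape of $Q$, then convert that structure into Nevanlinna-theoretic identities, and finally run a second-main-theorem bookkeeping to force the exponents into a short list. After normalizing $\deg_u P=\deg_u Q=2m/n$, I write $P(z,u)=c(z)\prod_{i=1}^{\mu}(u-\alpha_i)^{k_i}$ with $\sum_i k_i=2m/n$ and $\alpha_i$ small algebraic functions. Comparing Laurent orders at a $\tau_1$-point exactly as in the $\deg$-matching argument (left side $S(u,z)^m$ has pole order $2m$, right side has pole order $(2m/n)p$) shows $p=n$: almost all $\tau_1$-points have multiplicity $n$, while the poles of $u$ remain simple by \eqref{Eq.1}. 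Since $S(u,z)^m=P/Q$ with $\deg_u P=\deg_u Q=2m/n$, one has $T(r,S(u,z))=\tfrac2n T(r,u)+S(r,u)$, while the lemma of the logarithmic derivative gives $m(r,S(u,z))=S(r,u)$ and the poles of $S(u,z)$ are the double poles at the $\tau_1$-points; equating $N$ and $T$ yields $N(r,\tau_1,u)=T(r,u)+S(r,u)$, so $\tau_1$ is non-deficient with $\overline N(r,\tau_1,u)=\tfrac1n T(r,u)+S(r,u)$. Counting zeros of $S(u,z)$ as the $\alpha_i$-points and squeezing $m\,N(r,0,S(u,z))=\sum_i k_i N(r,\alpha_i,u)+S(r,u)\le \tfrac{2m}{n}T(r,u)+S(r,u)=(\sum_i k_i)T(r,u)+S(r,u)$ forces every $\alpha_i$ to be non-deficient as well.

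Next I classify the roots $\alpha_i$. As in the proof of Lemma~\ref{Pf.M.Thm.lem.6}, none is Picard exceptional, and I split them into \emph{completely ramified} roots (almost all $\alpha_i$-points of multiplicity $\ge 2$) and \emph{simple} roots. A simple $\alpha_i$ must be non-constant-compatible: at a simple $\alpha_i$-point the right side vanishes to order $k_i$ while $S(u,z)$ vanishes to order $k_i/m$, so $m\mid k_i$, i.e.\ $k_i=\beta_i m$ with $\beta_i\ge1$; because $\sum_i k_i=2m/n\le m$ (as $n\ge2$), at most one simple root can occur, and only when $n=2$, $\mu=1$, $k_1=m$. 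In that case \eqref{SWDE} reads $S(u,z)^m=c(z)\bigl((u-\alpha_1)/(u-\tau_1)\bigr)^{m}$; extracting the $m$-th root (the coefficient is rational and $\alpha_1$ algebraic) gives the first-order Schwarzian equation $S(u,z)=\tilde c(z)(u-\alpha_1)/(u-\tau_1)$, to which Theorem~\ref{Main_Theorem_2} (i.e.\ \cite[Theorem~1.1]{ISZ1997}) applies, placing $u$ among a Riccati equation, \eqref{Thm.1.E.6}, or \eqref{E.14}. A completely ramified $\alpha_i$, on the other hand, \emph{cannot} be constant: a constant value taken with multiplicity $\ge2$ would make $S(u,z)$ have a double pole there, contradicting the simple zero of the right side; so such $\alpha_i$ are genuinely algebraic, and at an $\alpha_i$-point of multiplicity $e_i$ the matching $S$-zero order $k_ie_i/m$ must be a positive integer, giving the divisibility $m\mid k_ie_i$.

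It remains to treat the completely ramified case, which produces the genuinely Schwarzian outputs. After reducing by $d=\gcd(m,2m/n)$ I may assume the reduced exponents are coprime, and when $\mu=1$ the Möbius change $v=1/(u-\tau_1)$ turns the equation into the autonomous-type normal form $S(v,z)^m=\tilde c(z)(v-\tilde\alpha)^{2m/n}$, where $v$ has poles of multiplicity $n$ and $\tilde\alpha$-points of multiplicity $e$ with $m\mid (2m/n)e$. I then combine three constraints: the degree relation $\sum_i k_i=2m/n$; the pole/zero order-matching at the $\tau_1$- and $\alpha_i$-points (which via the reduced coprime form forces $2M=Kn$ with $K\in\{1,2\}$ and $M\mid e$); and the small-function second main theorem \eqref{SFThm} bounding the number and the ramification of the completely ramified $\alpha_i$, together with Ishizaki's autonomous classification \cite{ISZ1991} of which exponents $(m,n)$ admit a transcendental meromorphic (algebroid) solution. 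This enumeration leaves exactly the extremal triples, $(m,n,\mu)=(2,4,1)$, $(2,2,\le 2)$, and $(3,3,1)$, which are precisely \eqref{Thm.1.E.7}, \eqref{Thm.1.E.8} (with $\alpha_1,\alpha_2$ possibly equal), and \eqref{Thm.1.E.10}; every other exponent pattern either violates the ramification count or collapses, through an auxiliary function built as in Lemmas~\ref{Pf.M.Thm.Lem.1}--\ref{Pf.M.Thm.lem.6} (with $T(r,\cdot)=S(r,u)$), to a first-order equation \eqref{Thm.1.E.6} or a Riccati equation. The main obstacle is exactly this last enumeration: showing that the admissible exponent patterns are \emph{only} the three listed ones and that all remaining patterns (for instance the a priori possible $(m,n)=(3,6)$) are inconsistent with a single-valued meromorphic solution or reduce to first order; this requires the sharp completely-ramified-function count from \eqref{SFThm} married to the local indicial data of the Schwarzian equation, and is where the bulk of the case-checking will lie.
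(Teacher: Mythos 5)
Your overall scaffolding (M\"obius normalization, local order matching at the multiplicity-$n$ points, bookkeeping via \eqref{SFThm}) points in the same direction as the paper, and your final target list $(m,n,\mu)=(2,4,1),(2,2,\le 2),(3,3,1)$ does match \eqref{Thm.1.E.7}--\eqref{Thm.1.E.10}. But there are two genuine gaps. First, your claim that ``as in Lemma~\ref{Pf.M.Thm.lem.6}, none of the $\alpha_i$ is Picard exceptional'' does not transfer to this case: in Lemma~\ref{Pf.M.Thm.lem.6} the denominator supplies three distinct non-deficient constants $\tau_1,\tau_2,\tau_3$, so \eqref{SFThm} excludes an exceptional root, whereas here the denominator carries the single value $\tau_1$ (equivalently, after $u=1/f$, only the poles of $u$), and \eqref{SFThm} with so few targets yields no contradiction. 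The paper must and does treat the exceptional case as a separate branch: for $\mu\ge 2$ it produces \eqref{Thm.1.E.8}, and for $\mu=1$ it derives, via an auxiliary logarithmic-derivative combination, a binomial relation $(u')^{nk_3}=b(u-\alpha_1)^{k_3(n+1)}$, which is then eliminated using Laine's classification of binomial first-order equations together with the zero count of the derivative. Your proposal omits this entire branch, and it is not a corner case --- it is the source of one of the listed output types.

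Second, you explicitly defer the decisive enumeration (``the main obstacle \ldots\ is where the bulk of the case-checking will lie''), and the tools you name will not close it. Your divisibility relations $m\mid k_ie_i$ place no upper bound on the multiplicities $e_i$ at the $\alpha_i$-points, and the appeal to Ishizaki's autonomous classification is inapplicable because the coefficients here are non-constant. The paper's mechanism for making the exponent bookkeeping finite is the auxiliary function $h=\bigl(u''/u'+\gamma\bigr)^2-(n+1)\bigl(u''/u'+\gamma\bigr)'$ with $\gamma=(n-1)b_0'/(4mb_0)$: the Laurent coefficient relation $c_{-n+1}/c_{-n}=-(n/4m)\,b_0'(z_0)/b_0(z_0)$ at the multiplicity-$n$ poles of $u$ makes those poles analytic points of $h$, hence $T(r,h)=S(r,u)$, which in turn yields the multiplicity cap $N_{(4}(r,\alpha_i,u)=S(r,u)$ and, in the triply-ramified sub-case, the ramification inequality $1+\frac{4n}{3(n+1)}\le \delta(0,u')+\theta(\alpha_1',u')+\theta(\infty,u')\le 2$, forcing $n\le 3$. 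These two quantitative facts are exactly what you lack, so the enumeration you acknowledge as the main obstacle remains open; as written, the proposal is a plan rather than a proof.
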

    
    \begin{proof}
        Without loss of generality, we may assume that $\tau_{1}=0$.
        	By the M$\mathrm{\ddot{o}}$bius transformation $u = 1/f$, we have 
            \begin{equation}\label{Pf.1.Lem.3.7_E.1}
                S(u,z)^{m} = P_{1}(z,u),
            \end{equation}
        where $\deg P_{1} = 2m/n$.
        In what follows we write
        	\begin{align*}
        		P_{1}(z,u) &= b_{0}u^{2m/n} + b_{1}u^{2m/n -1} + \cdots + b_{2m/n-1}u + b_{2m/n}\\
        		&= b_{0}(u-\alpha_{1})^{k_{1}}(u-\alpha_{2})^{k_{2}}\cdots(u-\alpha_{\mu})^{k_{\mu}},
        	\end{align*}
        where $b_{0}, \ldots, b_{2m/n}$ are rational functions, $\alpha_{1},\ldots,\alpha_{\mu}$ are in general algebraic functions and $k_{i} \in \mathbb{N}^{+}$, $i=1,\ldots,\mu.$
        By \cite[Lemma~1]{ISZ1991}, we have $N(r,1/f)=T(r,f)+S(r,f)$, and almost all zeros of $f$ are of multiplicity $n$. Hence, $N(r,u)=T(r,u)+S(r,u)$, and almost all poles of $u$ are of multiplicity $n$. Let $z_{0}$ be a pole of $u$, and let 
            \begin{equation*}
                u(z) = c_{-n}(z-z_{0})^{-n} + c_{-n+1}(z-z_{0})^{-n+1} + \cdots
            \end{equation*}	
        be its Laurent expansion at $z=z_{0}$. Thus, we obtain the Laurent expansion
            \begin{equation*}
                S(u,z)^{m} = \left(\frac{1-n^{2}}{2}\right)^{m}(z-z_{0})^{-2m} - \frac{2m}{n}\left(\frac{1-n^{2}}{2}\right)^{m}\frac{c_{-n+1}}{c_{-n}} (z-z_{0})^{-2m+1} + \cdots.
            \end{equation*}
        On the other hand, from the right-hand side of \eqref{Pf.1.Lem.3.7_E.1}, we see that 
            \begin{align*}
                &S(u,z)^{m} = P_{1}(z,u)\\
                &=  c_{-n}^{2m/n}b_{0}(z_{0})(z-z_{0})^{-2m} + \left[c_{-n}^{2m/n}b_{0}'(z_{0}) + \frac{2m}{n}c_{-n}^{(2m/n)-1}c_{-n+1}b_{0}(z_{0})\right](z-z_{0})^{-2m+1} + \cdots.
            \end{align*}
        Therefore, 
            \begin{align}
                \frac{c_{-n+1}}{c_{-n}} = -\frac{n}{4m}\frac{b'_{0}(z_{0})}{b_{0}(z_{0})}.\label{Pf.1.Lem.3.7_E.2}
            \end{align}
        Define
            \begin{equation}\label{Pf.1.Lem.3.7_E.3}
                h := \left(\frac{u''}{u'} + \gamma\right)^{2} - (n+1)\left(\frac{u''}{u'} + \gamma\right)',
            \end{equation}
        where $\gamma = \frac{(n-1)b_{0}'(z)}{4mb_{0}(z)}$. 
        It follows from \eqref{Pf.1.Lem.3.7_E.1} that $N(r, 0, u')=S(r,u).$ 
        Hence, almost all poles $z_{0}$ of $h$ must be poles of $u$, and almost all the poles of $h$ are of multiplicity $2$.
        But then, \eqref{Pf.1.Lem.3.7_E.2} and \eqref{Pf.1.Lem.3.7_E.3} imply  that 
       	almost all poles of $u$ are analytic points of $h$. Hence, we obtain $N(r,h)=S(r,u).$ Therefore, $T(r,h)=m(r,h)+S(r,u)=S(r,u).$ 
	    We conclude from \eqref{Pf.1.Lem.3.7_E.1} and \eqref{Pf.1.Lem.3.7_E.3} that  $N_{(4}(r,\alpha_{i},u)=S(r,u), i=1,2,\ldots,\mu.$ 
	    Now, we assume that  $\alpha_{i}$ $( i=1,2,\ldots,\mu)$ are not Picard exceptional small functions and $N_{2)}(r,\alpha_{1},u) = S(r,u).$
	    By \eqref{Pf.1.Lem.3.7_E.1}, it easily shows that $$3\overline{N}(r,\alpha_{1},u) = N(r,\alpha_{1},u) = T(r,u)+S(r,u)=n\overline{N}(r,u).$$
	    Thus, by \eqref{SFThm}, we have 
	          		$$
	          		1 + \frac{4n}{3(n+1)} \le \delta(0, u') + \theta(\alpha_{1}',u') + \theta (\infty,u') \le 2.
	          		$$
	    Hence, we obtain $n \le 3$, and $N_{2)}(r,\alpha_{i},u) \neq S(r,u), i=2,\ldots,\mu.$ According to the generalization of Nevanlinna's second main theorem \eqref{SFThm}, similar to the proof of Lemma~\ref{Pf.M.Thm.lem.6}, we conclude that \eqref{Pf.1.Lem.3.7_E.1} reduces into \eqref{Thm.1.E.7}-\eqref{Thm.1.E.10}, or $m=1$. If $m=1$, by \cite[Theorem~1.1]{ISZ1997}, \eqref{SWDE} satisfies a first order algebraic differential equation \eqref{Thm.1.E.6}.

        It remains to consider the case that there exists an algebraic function $\alpha_{1}$ such that it is an exceptional small function. If $\mu \ge 2$, then \eqref{SWDE} reduces into \eqref{Thm.1.E.8}. If $\mu = 1,$ then $\alpha_{1}$ must be a rational function. Define
            \begin{equation}\label{Pf.1.Lem.3.7_E.4}
                h_{1}(z):= n\frac{u''}{u'} -(n+1)\frac{u'-\alpha_{1}'}{u-\alpha_{1}}.
            \end{equation}
        Then almost all poles of $ u$ are analytic points of $h_{1}$. Thus, we get $N(r,h_{1})=S(r,u).$ Therefore, we obtain $T(r,h_{1})=m(r,h_{1})+S(r,u)=S(r,u).$ 
        It follows from \eqref{Pf.1.Lem.3.7_E.4} that $h_{1}(z_{0}) = k\frac{c_{-n+1}}{c_{-n}} = k_{1}\gamma(z_{0}) = k_{2}\frac{b'_{0}(z_{0})}{b_{0}(z_{0})} (k,k_{1},k_{2} \in \mathbb{R}).$ Thus, we get
            \begin{equation}\label{Pf.1.Lem.3.7_E.5}
                (u')^{nk_{3}} = b(u-\alpha_{1})^{k_{3}(n+1)},
            \end{equation}
        where $k_{3} \in\mathbb{R}\setminus \{0\},$ $b$ is a rational function. Making a suitable transformation $v = \frac{1}{u-a}$ $(a \in \mathbb{C}\setminus\{0\}),$ we have 
            \begin{equation}\label{Pf.1.Lem.3.7_E.6}
                (v')^{nk_{3}} = \sum_{i=0}^{2k_{3}n}a_{i}(z)v^{i},
            \end{equation}
        where $a_{i}$ are rational functions.
        By \cite[Theorem 10.3]{Laine1993}, \eqref{Pf.1.Lem.3.7_E.6} reduces into six types of equations.
        From \eqref{Pf.1.Lem.3.7_E.5}, we have $N(r,0,u') = S(r,u).$ Hence,
        almost all zeros of $v'$ must be zeors of $v$. Therefore, we compare the zeros of $v'$ in these six equations respectively, it is easy to show that it is impossible. It completes the proof.
    \end{proof}

    \begin{lemma}\label{Pf.M.Thm.lem.8}
        Suppose the conditions of Theorem~1.1 hold, and $Q(z,f)$ is of the form 
        \eqref{Sec.2.Lem.4.Q.E.14}. Then by a suitable M$\ddot{o}$bius transformation $u = (af + b)/(cf + d), ad-bc \neq 0,$ $u$ satisfies a first order differential equation \eqref{Thm.1.E.6}, or one of types \eqref{Thm.1.E.11}-\eqref{Thm.1.E.16}.
    \end{lemma}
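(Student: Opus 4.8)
The lemma concerns $Q(z,f)$ of the form \eqref{Sec.2.Lem.4.Q.E.14}, namely
$Q(z,f)=c(z)(f-\tau_1)^{2m/n_1}(f-\tau_2)^{2m/n_2}$ with $n_j\mid(2m)$, $n_j\ge 2$. The plan is to follow the same template already established in Lemmas~\ref{Pf.M.Thm.lem.6} and~\ref{Pf.M.Thm.lem.7}. First I would apply a suitable Möbius transformation $u=(af+b)/(cf+d)$ to bring \eqref{SWDE} into the normalized shape $S(u,z)^m=P_1(z,u)/Q_1(z,u)$ with $\deg_u P_1=\deg_u Q_1$, where $Q_1$ is of the form \eqref{Sec.2.Lem.4.Q.E.14}. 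Writing $P_1(z,u)=c(z)\prod_{i=1}^{\mu}(u-\alpha_i)^{k_i}$ with $\alpha_i$ algebraic and $k_i\in\mathbb{N}^+$, I would record that no $\alpha_i$ can be a Picard exceptional small function (by~\eqref{SFThm}), and that at most one $\alpha_i$ can be a completely ramified small function, so the number $s$ of such functions satisfies $s\le 1$.

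\textbf{Pole structure and the key counting identity.}
The crucial input is the pole behavior of $u$. By \eqref{Eq.2} almost all zeros of $Q_1(z,u)$ are zeros of $u'$, so the $\tau_j$-points of $u$ have multiplicity forced by the exponent $2m/n_j$; concretely, at a $\tau_j$-point the local multiplicity of $u$ is $n_j$, and hence almost all $\tau_j$-points of $u$ have multiplicity $n_j\ge 2$. As in Lemma~\ref{Pf.M.Thm.lem.7}, a local Laurent-expansion computation at such a point, comparing the $(z-z_0)^{-2m}$ coefficient of $S(u,z)^m$ (which is $((1-n_j^2)/2)^m$) against the leading coefficient coming from $P_1/Q_1$, yields a relation of the type $\overline{N}(r,\tau_j,u)=\frac{1}{n_j}N(r,\tau_j,u)$, and these together give the ramification deficiencies needed to feed into \eqref{SFThm}. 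The main obstacle — and the place where real case-work is needed — is to combine the degree identity $\sum_i k_i=\sum_j 2m/n_j$ with the constraint from the truncated second main theorem \eqref{SFThm}, namely
\[
(\mu+2-2)T(r,u)\le \sum_{j=1}^{2}\overline{N}(r,\tau_j,u)+\sum_{i=1}^{\mu}\overline{N}(r,\alpha_i,u)+S(r,u),
\]
to bound $\mu$, the pairs $(n_1,n_2)$, and the admissible multiplicity pattern $(k_i)$. This is exactly the bookkeeping that produced the short finite list \eqref{Thm.1.E.11}-\eqref{Thm.1.E.16} and I expect it to be the bulk of the work here.

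\textbf{Disposing of the subcases.}
Once the numerical constraints pin down $n_1,n_2$ and the exponents $k_i$, I would treat the resulting possibilities one at a time. When $s=0$ and every $\alpha_i$ genuinely occurs (so $N_{(1)}(r,\alpha_i,u)\neq S(r,u)$ and $k_i=\beta_i m$-type divisibility holds after normalization), the surviving degree-compatible configurations are precisely \eqref{Thm.1.E.11}-\eqref{Thm.1.E.16}. When $s=1$, I would use the completely ramified $\alpha_1$ to construct an auxiliary function (of the shape $h=(u''/u'+\gamma)^2-(\text{const})(u''/u'+\gamma)'$ with $\gamma$ a logarithmic-derivative small term, mirroring \eqref{Pf.1.Lem.3.7_E.3}) whose poles are almost all cancelled, forcing $T(r,h)=S(r,u)$ and hence a further algebraic relation among $u,u'$; this collapses the case either into one of the listed types or, in the borderline $m=1$ situation, into the first-order equation \eqref{Thm.1.E.6} via \cite[Theorem~1.1]{ISZ1997}. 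The one genuinely delicate point to watch is the exceptional-small-function subcase ($\mu=1$ with an exceptional $\alpha_1$), where as in Lemma~\ref{Pf.M.Thm.lem.7} one derives a relation $(u')^{nk_3}=b\,(u-\alpha_1)^{k_3(n+1)}$, applies a transformation $v=1/(u-a)$, invokes \cite[Theorem 10.3]{Laine1993} to list the six possible first-order forms, and rules them out by comparing the forced zeros of $v'$ against $N(r,0,u')=S(r,u)$; I would check that the same zero-counting contradiction persists in the present two-pole setting.
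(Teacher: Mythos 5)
There is a genuine gap. Your proposal treats this case as a rerun of Lemmas~\ref{Pf.M.Thm.lem.6} and~\ref{Pf.M.Thm.lem.7}, with the ``bulk of the work'' being bookkeeping that combines the degree identity with the truncated second main theorem \eqref{SFThm}. But for $Q$ of the form \eqref{Sec.2.Lem.4.Q.E.14} that bookkeeping is not enough: the central difficulty, to which the paper devotes most of its proof, is showing that the two multiplicities are equal, $n_{1}=n_{2}$ --- which is exactly why every type in the resulting list \eqref{Thm.1.E.11}--\eqref{Thm.1.E.16} has symmetric denominator exponents. Counting alone does not force this. For instance, take $m=3$, $(n_{1},n_{2})=(3,6)$, so the candidate equation is $S(u,z)^{3}=c(z)(u-\alpha_{1})^{3}/\bigl((u-\tau_{1})^{2}(u-\tau_{2})\bigr)$: the degree identity $\sum_{i}\beta_{i}=2/n_{1}+2/n_{2}=1$ holds, and \eqref{SFThm} gives only $T(r,u)\le\bigl(\tfrac{1}{3}+\tfrac{1}{6}\bigr)T(r,u)+\overline{N}(r,\alpha_{1},u)+S(r,u)$, i.e.\ $\overline{N}(r,\alpha_{1},u)\ge \tfrac{1}{2}T(r,u)$ --- no contradiction. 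So your ``surviving degree-compatible configurations'' would include equations with $n_{1}\neq n_{2}$ that are not on the list, and the proof as proposed does not close.

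The paper's mechanism for excluding $n_{1}\neq n_{2}$ is the part your sketch lacks. After normalizing so that $\tau_{1}\mapsto\infty$ and $\tau_{2}\mapsto 0$ (giving $S(u,z)^{m}=P_{1}(z,u)/u^{2m/n_{2}}$, with poles of $u$ of multiplicity $n_{1}$ and zeros of multiplicity $n_{2}$), one builds \emph{two} small auxiliary functions, $h_{1}$ adapted to the poles and $h_{2}$ adapted to the zeros, as in \eqref{Pf.1.Lem.3.8_E.4} and \eqref{Pf.1.Lem.3.8_E.5} --- your single auxiliary function modeled on \eqref{Pf.1.Lem.3.7_E.3} handles only one of the two singular structures. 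If $n_{1}\neq n_{2}$, the combination $h_{1}-\tfrac{n_{1}}{n_{2}}h_{2}$ in \eqref{Pf.1.Lem.3.8_E.6} is nondegenerate, and the substitution $g:=u'/u$ turns it into a first-order algebraic ODE $\left[g'+B(z,g)\right]^{2}=D(z,g)$ with $\deg_{g}D=4$. The perfect-square case $D=Q^{2}$ is excluded because it would make $u''/u'$ a combination of $u'/u$ and small functions; otherwise Steinmetz's theorem \cite[Theorem~2]{Steinmetz1980} reduces $g$, via a M\"obius transformation, to $(w')^{2}=a(w-e_{1})(w-e_{2})(w-e_{3})$ or $(w'+bw)^{2}=aw(1+cw)^{2}$, each of which the paper eliminates by Clunie-lemma and value-distribution arguments (including \cite[Theorem~10.3]{Laine1993}). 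Only after this contradiction establishes $n_{1}=n_{2}$ does the Lemma~\ref{Pf.M.Thm.lem.6}-style classification (together with $N_{(4}(r,\alpha_{i},u)=S(r,u)$ from $h_{1}$, and the $m=1$ fallback to \eqref{Thm.1.E.6} via \cite[Theorem~1.1]{ISZ1997}) produce \eqref{Thm.1.E.11}--\eqref{Thm.1.E.16}. Your proposal contains no step that could deliver the equality $n_{1}=n_{2}$, so it is missing the key idea of this lemma.
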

    \begin{proof}
        By a suitable M$\mathrm{\ddot{o}}$bius transformation $u = (af + b)/(cf + d), ad-bc \neq 0,$ we have
            \begin{equation}\label{Pf.1.Lem.3.8_E.1}
                S(u,z)^{m} = \frac{P_{1}(z,u)}{u^{2m/n_{2}}},
            \end{equation}
        where $k :=\deg P_{1} = \frac{2m}{n_{1}} + \frac{2m}{n_{2}}.$ In what follows we write
        $$P_{1}(z,u) = b_{0}u^{k} + b_{1}u^{k -1} + \cdots + b_{k-1}u + b_{k} =b_{0}(u-\alpha_{1})^{k_{1}}(u-\alpha_{2})^{k_{2}}\cdots(u-\alpha_{\mu})^{k_{\mu}},$$
        where $b_{0}, \ldots, b_{k}$ are rational functions, $\alpha_{1},\ldots,\alpha_{\mu}$ are in general algebraic functions and $k_{i}$ denote the orders of the roots $\alpha_{i},$ $i=1,\ldots,\mu.$
        Let $z_{0}$ be a pole of $u$ and let 
            \begin{equation*}
                u(z) = c_{-n_{1}}(z-z_{0})^{-n_{1}} + c_{-n_{1}+1}(z-z_{0})^{-n_{1}+1} + \cdots
            \end{equation*}	
        be its Laurent expansion at $z=z_{0}$. Thus, we obtain the Laurent expansion
            \begin{equation*}
                S(u,z)^{m} = \left( \frac{1-n_{1}^{2}}{m}\right)^{m} (z-z_{0})^{-2m} -\frac{2m}{n_{1}}\left(\frac{1-n_{1}^{2}}{2}\right)^{m}\frac{c_{-n_{1}+1}}{c_{-n_{1}}} (z-z_{0})^{-2m+1} + \cdots.
            \end{equation*}
        On the other hand, from the right-hand side of \eqref{Pf.1.Lem.3.7_E.1}, we see that 
           \begin{align*}
                &S(u,z)^{m} = \frac{P_{1}(z,u)}{u^{2m/n_{2}}}\\
                &=  c_{-n_{1}}^{2m/n_{1}}b_{0}(z_{0})(z-z_{0})^{-2m} + \left[c_{-n_{1}}^{2m/n_{1}}b_{0}'(z_{0}) + \frac{2m}{n_{1}}c_{-n_{1}}^{2m/n_{1}-1}c_{-n_{1}+1}b_{0}(z_{0})\right](z-z_{0})^{-2m+1} + \cdots.
            \end{align*}
        Therefore,
            \begin{align}
              \frac{c_{-n_{1}+1}}{c_{-n_{1}}} = - \frac{n_{1}}{4m}\frac{b_{0}'(z_{0})}{b_{0}(z_{0})}.\label{Pf.M.Thm.Lem.6.E.3}
            \end{align}
        Define 
            \begin{equation}\label{Pf.1.Lem.3.8_E.4}
                h_{1} := \left(\frac{u''}{u'} - \frac{n_{2}-1}{n_{2}}\frac{u'}{u} + \gamma_{1}\right)^{2}   - \frac{n_{1}+n_{2}}{n_{2}}\left(\frac{u''}{u'} - \frac{n_{2}-1}{n_{2}}\frac{u'}{u} + \gamma_{1}\right)',
            \end{equation}
        where $\gamma_{1} = \frac{n_{1}-n_{2}}{4mn_{2}}\frac{b_{0}'}{b_{0}}.$  
        It follows from \eqref{Pf.1.Lem.3.8_E.1} that almost all poles $z_{0}$ of $h_{1}$ must be poles of $u$, and almost all poles of $h_{1}$ are of multiplicity $2$. But then, \eqref{Pf.M.Thm.Lem.6.E.3} and \eqref{Pf.1.Lem.3.8_E.4} imply that 
         almost all poles of $u$ are analytic points of $h_{1}$. Hence, we get $N(r,h_{1}) = S(r,u).$ Therefore, $T(r,h_{1}) = m(r,h_{1}) + S(r,u) = S(r,u).$ 
         Furthermore, consider zeros of $u$, and define
         	\begin{equation}\label{Pf.1.Lem.3.8_E.5}
         		 h_{2} := \left(\frac{u''}{u'} - \frac{n_{1}+1}{n_{1}}\frac{u'}{u} + \gamma_{2}\right)^{2}   - \frac{n_{1}+n_{2}}{n_{1}}\left(\frac{u''}{u'} - \frac{n_{1}+1}{n_{1}}\frac{u'}{u} + \gamma_{2}\right)',
         	\end{equation}
         where $\gamma_{2}= \frac{n_{2}-n_{1}}{4mn_{1}}\frac{b_{k}'}{b_{k}}$. Similar to the proof above, we get $T(r,h_{2}) = S(r,u).$
         
         Next, we want to prove that $n_{1}= n_{2}.$ Assume $n_{1} \neq n_{2}$.  It follows from $\eqref{Pf.1.Lem.3.8_E.4}$ and $\eqref{Pf.1.Lem.3.8_E.5}$ that 
         \begin{equation}\label{Pf.1.Lem.3.8_E.6}
         	h_{1} - \frac{n_{1}}{n_{2}}h_{2}  = k_{1}\left(\frac{u''}{u'}\right)^{2} + k_{2} \frac{u''}{u'} + k_{3}\frac{u''}{u} + k_{4}\left(\frac{u'}{u}\right)^{2} + k_{5}\frac{u'}{u} + s(z),
         \end{equation}
             where $k_{1} \in \mathbb{C}\setminus \{0\},$ $k_{3}, k_{4} \in\mathbb{C}$, $k_{2}, k_{5},$ and $s(z)$ are rational functions.
         By $ g:= u'/u$, \eqref{Pf.1.Lem.3.8_E.6} takes the form 
         	\begin{equation}\label{Pf.1.Lem.3.8_E.7}
         		k_{1}(g')^{2} + (2k_{1}+ k_{3})g^{2}g' + k_{2}gg' + (k_{1}+k_{3}+k_{4})g^{4} + (k_{2}+k_{5})g^{3} + s_{1}g^{2} = 0,
         	\end{equation}
         where $k_{1}, 2k_{1}+k_{3}, k_{1}+ k_{3} + k_{4} \in\mathbb{C}\setminus\{0\}$, $s_{1} = s + n_{1}h_{2}/n_{2} - h_{1}.$ We write \eqref{Pf.1.Lem.3.8_E.7} as 
         \begin{equation*}
         	\left[g' + B(z,g)\right]^{2} = D(z,g),
         \end{equation*}
         where $\deg_{g} B(z,g) + 2 = \deg_{g}D(z,g) = 4.$
          If 
         $$
         D(z,g) = Q^{2}(z,g),
         $$
         where $Q(z,g)$ is a polynomial in $g$ with rational coefficients, then $g$ must satisfy a Riccati differential equation 
         \begin{equation*}
         	g' = \delta_{1}g + \delta_{2}g^{2},
         \end{equation*}
         where $\delta_{1}$ is a rational function, $\delta_{2}$ is a non-zero rational function.
         Therefore, we have 
         \begin{equation*}
         	\frac{u''}{u'} = (\delta_{2}+1)\frac{u'}{u} + \delta_{1},
         \end{equation*}
         which is a contradiction. Therefore, we may assume that $D(z,g) \not \equiv Q^{2}(z,g).$
         Here $g$ is a transcendental meromorphic solution, according to \cite[Theorem~2]{Steinmetz1980}, we have by a suitable M$\mathrm{\ddot{o}}$bius transformation with rational coefficients
         	\begin{equation}\label{Pf.1.Lem.3.8_E.13}
         		w=\frac{\alpha(z) g + \beta(z)}{\gamma(z) g + \delta(z)}, \quad \alpha(z)\delta(z) - \beta(z)\gamma(z) \not\equiv 0,
         	\end{equation}
          $w$ satisfies one of the following types 
         	\begin{align}
         		&(w')^{2} = a(z)(w-e_{1})(w-e_{2})(w-e_{3}),\label{Pf.1.Lem.3.8_E.11}\\
         		&(w' + b(z)w)^{2} = a(z)w(1+c(z)w)^{2},\label{Pf.1.Lem.3.8_E.12}
         	\end{align}
         where $e_{1}, e_{2}, e_{3}$ are distinct constants, $c(z)$ is a rational function, and $a(z), b(z)$ are non-zero rational functions.
         If $w$ satisfies the equation \eqref{Pf.1.Lem.3.8_E.11}, then by \eqref{SFThm} and $N(r,0, g) = S(r,g)$, we can easily see that it is impossible.
         Hence, we may assume that $w$ satisfies the equation \eqref{Pf.1.Lem.3.8_E.12} and $c(z) \not \equiv 0$. We discuss it in two cases.
         
         \noindent
         Case 1: $\delta \equiv 0.$ We have 
         	$$
         	w = \frac{\alpha}{\gamma} + \frac{\beta}{\gamma g}.
         	$$ 
         Hence, we obtain $N(r,w) = S(r,w)$. But then, Clunie Lemma implies that $m(r,w) = S(r,w),$ which is impossible.
         
         \noindent
         Case 2: $\delta \not\equiv 0.$ We have $N(r,\beta/\delta,w)= S(r,w).$
         If $\beta \equiv 0,$ then we have $N(r,0,w) = S(r,w).$ Let $h(z):= 1/w.$ We have 
         	$$
         	h'^{2} + b^{2}h^{2} - 2bhh' = ah^{3} + ac^{2}h + 2ach^{2}.
         	$$
         By Clunie Lemma and $a\not\equiv 0$, we have $m(r,h) = S(r,h),$ which is impossible.
         We assume that $\beta \not\equiv 0.$ Therefore, \eqref{Pf.1.Lem.3.8_E.13} must be 
         	\begin{equation*}
         		w=\frac{\alpha(z) g + \beta(z)}{\gamma(z) g + \delta(z)}, \quad \alpha(z)\delta(z) - \beta(z)\gamma(z) \not\equiv 0,
         	\end{equation*}
        where $\alpha, \beta, \gamma, \delta \not\equiv 0.$ 
        Let $x:=\beta/\delta,$
        by Clunie Lemma, we get $b=c'/c$ and $c=-1/x$. Hence, we have 
        	\begin{equation*}
        		(cw'+c'w)^{2} = ac^{2}w(1+cw)^{2}.
        	\end{equation*}
        Let $f:=cw$, we obtain
        	\begin{equation*}
        		(f')^{2} = acf(1+f)^{2}.
        	\end{equation*}
        According to \cite[Theorem 10.3]{Laine1993}, it is impossible.
          It remains to be considered that $c(z) \equiv 0.$  By the proof of \cite[Theorem~2]{Steinmetz1980}, we have 
          	\begin{equation}\label{Pf.1.Lem.3.8_E.15}
          		\left(w'+bw\right)^{2} = aw,
          	\end{equation}
          	where $b=c_{1}\beta,$ $a=c_{2}\beta^{2}$, $c_{1}, c_{2} \in \mathbb{C}\setminus\{0\}$, and 
         \eqref{Pf.1.Lem.3.8_E.13} must be
          $$
          w = \frac{ g - \beta}{ g} = 1 - \frac{\beta u }{u'}.
          $$
          where $\beta = c_{3}\gamma_{1} + c_{4}\gamma_{2},$ $c_{3}, c_{4} \in\mathbb{C},$ $c_{3} + c_{4} \neq 0.$
          By \eqref{Pf.1.Lem.3.8_E.15}, we get 
          $$
          \frac{w'+2bw}{w} = \frac{a-b^{2}w}{w'}.
          $$
          Then we have 
          $$
          \frac{1}{2}T(r,w) = T\left(r,\frac{w'}{a-b^{2}w}\right) = m\left(r,\frac{w'}{a-b^{2}w}\right) + N\left(r,\frac{w'}{a-b^{2}w}\right) + S(r,w).
          $$
          Hence, we obtain $\overline{N}(r,0,w') = \overline{N}(r,0,w) + O(\log r),$ and almost all zeros of $a-b^{2}w$ are simple. Therefore, by \eqref{SFThm}, we get 
          $
          m\left(r,1/(a-b^{2}w)\right) = \frac{1}{2} T(r,w),
          $
          and 
          $
          m\left(r,w'/(a-b^{2}w)\right)  = S(r,w).
          $
           Define
          	$$
          	h_{3}(z) := \frac{w'}{w} - 2\frac{w''}{w'}.
          	$$
          Obviously, $T(r,h_{3}) = S(r,w),$ and $h_{3} = c_{5}(b - \frac{a'}{a}),(c_{5} \in\mathbb{C}\setminus\{0\})$. Then we have 
          	\begin{equation*}
          		w^{k} = v(z)(w')^{2k},
          	\end{equation*}
          where $k\in\mathbb{C}\setminus\{0\}$, $v$ is a rational function. Hence, we have  $m(r,0,w') = S(r,w)$, which is a contradiction. Therefore, we obtain $n_{1} = n_{2}.$
          
        By \eqref{Pf.1.Lem.3.8_E.1} and \eqref{Pf.1.Lem.3.8_E.4}, we obtain $N_{(4}(r,\alpha_{i},u)=S(r,u), i=1,2,\ldots,\mu.$ Similar to the proof of Lemma~\ref{Pf.M.Thm.lem.6} and $n_{1}=n_{2}$, \eqref{Pf.1.Lem.3.8_E.1} reduces into one of the equations \eqref{Thm.1.E.11}-\eqref{Thm.1.E.16}, or $m=1$. If $m=1$, by \cite[~Theorem~1.1]{ISZ1997}, $u$ satisfies a first order algebraic differential equation \eqref{Thm.1.E.6}. This completes this proof.
    \end{proof}
    
    
    \emph{PROOF OF THEOREM~\ref{Main_Theorem}}. If $Q(z,u)$ is of the form \eqref{Sec.2.Lem.4.Q.E.16}, then $S(u,z)$ is a rational function because $u$ is a transcendental meromorphic solution of \eqref{SWDE}. Together with  Lemma~\ref{Pf.M.Thm.Lem.1}-Lemma~\ref{Pf.M.Thm.lem.7}, we conclude that $u$ satisfies a Riccati differential equation with small meromorphic coefficients, or one of  the six types of first-order differential equations \eqref{1.2}-\eqref{Thm.1.E.6}, or $u$ satisfies one of types of Schwarzian differential equations \eqref{Thm.1.E.7}-\eqref{E.14}.

    \section{Proof of Theorem~\ref{Main_Theorem_2}}

    	If the Schwarzian differential equation \eqref{SWDE} with small meromorphic coefficients admits an admissible meromorphic solution, then by the proof of Lemma~\ref{Pf.M.Thm.lem.4}, we have \eqref{E.7} is reduced into a Riccati differential equation. Therefore, we get the conclusion.

\end{document}